\begin{document}
\title{On Volumes of Arithmetic Line Bundles}
\author{Xinyi Yuan
\footnote{The author is fully supported by a research fellowship of the Clay Mathematics Institute.}
}
\maketitle

\theoremstyle{plain}
\newtheorem{thm}{Theorem}[section]
\newtheorem*{conj}{Conjecture}
\newtheorem*{notation}{Notation}
\newtheorem{cor}[thm]{Corollary}
\newtheorem*{corr}{Corollary}
\newtheorem{lem}[thm]{Lemma}
\newtheorem{pro}[thm]{Proposition}
\newtheorem{definition}[thm]{Definition}
\newtheorem*{thmm}{Theorem}

\theoremstyle{remark} \newtheorem*{remark}{Remark}
\theoremstyle{remark} \newtheorem*{example}{Example}

\newcommand{\RR}{\mathbb{R}}
\newcommand{\QQ}{\mathbb{Q}}
\newcommand{\CC}{\mathbb{C}}
\newcommand{\ZZ}{\mathbb{Z}}
\newcommand{\FF}{\mathbb{F}}
\newcommand{\PP}{\mathbb{P}} 

\newcommand{\fp}{\mathbb{F}_p}
\newcommand{\fwp}{\mathbb{F}_{\wp}}

\newcommand{\rank}{\mathrm{rank}}        % rank
\newcommand{\vol}{\mathrm{vol}}          % volume
\newcommand{\divv}{\mathrm{div}}         % divisor
\newcommand{\Spec}{\mathrm{Spec}}        % Spec
\newcommand{\ord}{\mathrm{ord}}          % ord

\newcommand{\lb}{\mathcal{L}}            % Line bundle
\newcommand{\mb}{\mathcal{M}}
\newcommand{\nb}{\mathcal{N}}
\newcommand{\fb}{\mathcal{F}}
\newcommand{\eb}{\mathcal{E}}
\newcommand{\tb}{\mathcal{T}}
\newcommand{\ob}{\mathcal{O}}
\newcommand{\ib}{\mathcal{I}}
\newcommand{\jb}{\mathcal{J}}
\newcommand{\ab}{\mathcal{A}}
\newcommand{\bb}{\mathcal{B}} 

\newcommand{\hhat}{\hat h^0}
\newcommand{\Hhat}{\widehat H^0}

\newcommand{\xb}{\mathcal{X}}             % integral model

\newcommand{\lbb}{\overline{\mathcal{L}}}             % Line bundles with bar
\newcommand{\mbb}{\overline{\mathcal{M}}}
\newcommand{\nbb}{\overline{\mathcal{N}}}
\newcommand{\ebb}{\overline{\mathcal{E}}}
\newcommand{\tbb}{\overline{\mathcal{T}}}
\newcommand{\obb}{\overline{\mathcal{O}}}
\newcommand{\abb}{\overline{\mathcal{A}}}
\newcommand{\bbb}{\overline{\mathcal{B}}}

\newcommand{\lbt}{\widetilde{\mathcal{L}}}
\newcommand{\mbt}{\widetilde{\mathcal{M}}}

\newcommand{\chern}{\hat{c}_1}           % Chern classes with hat
\newcommand{\height}{h_{\lbb}}           % height
\newcommand{\cheight}{\hat{h}_{\lb}}     % canonical height

\newcommand{\lnorm}[1]{\|#1\|_{L^2}}     % L_2 norm
\newcommand{\supnorm}[1]{\|#1\|_{\mathrm{sup}}}    % sup norm
\newcommand{\chil}{\chi_{_{L^2}}}
\newcommand{\chisup}{\chi_{\sup}}

\newcommand{\amp}{\widehat{\mathrm{Amp}}(X)}   % Ample(X)   
\newcommand{\nef}{\widehat{\mathrm{Nef}}(X)}   
\newcommand{\eff}{\widehat{\mathrm{Eff}}(X)}   
\newcommand{\pic}{\widehat{\mathrm{Pic}}(X)}   
\newcommand{\bigx}{\widehat{\mathrm{Big}}(X)}

\tableofcontents

\section{Introduction}
In their recent paper \cite{LM}, Lazarsfeld and Musta\c t\v a explored a systematic way of using Okounkov bodies, originated in \cite{Ok1, Ok2}, to study the volumes of line bundles over algebraic varieties. They easily recovered many positivity results in algebraic geometry (cf. \cite{La}). A similar construction with a different viewpoint was also taken by Kaveh-Khovanskii \cite{KK}. Our paper is the expected arithmetic analogue of \cite{LM}. Our main results are as follows:

\begin{itemize}
\item Introduce arithmetic Okounkov bodies associated to an arithmetic line bundle, and prove that the volumes of the former approximate the volume of the later; 
\item Show some log-concavity inequalities on the volumes and top intersection numbers, which can be viewed as a high-dimensional generalization of the Hodge index theorem on arithmetic surfaces of Faltings \cite{Fa}; 
\item Prove an arithmetic analogue of Fujita's approximation theorem, which is proved independently by Huayi Chen \cite{Ch2} during the preparation of this paper;
\item As by-products, we recover the convergence of $\displaystyle\frac{\hhat(X,m\lbb)}{m^d/d!}$ proved by Chen \cite{Ch1} and the arithmetic Hodge index theorem in codimension one proved by Moriwaki \cite{Mo1}.
\end{itemize}

\subsection*{Volume of an arithmetic line bundle}
Let $X$ be an arithmetic variety of dimension $d$. That is, $X$ is a $d$-dimensional integral scheme, projective and flat over $\Spec(\ZZ)$.
For any Hermitian line bundle $\lbb=(\lb, \|\cdot\|)$ over $X$,
denote 
$$\Hhat(X,\lbb)= \{s\in H^0(X,\lb): \|s\|_{\sup} \leq 1\}$$ 
and 
$$\hhat(X,\lbb)= \log \# \Hhat(X,\lbb).$$
Define the volume to be
$$\vol(\lbb)=\limsup_{m\rightarrow \infty}  \frac{\hhat(X,m\lbb)}{m^d/d!}.$$
Note that in this paper we write line bundles additively, so $m\lbb$ means $\lbb^{\otimes m}$.

A line bundle $\lbb$ is said to be big if $\vol(\lbb)>0$; it is effective if $\hhat(X,\lbb)>0$.
If $\lbb$ is ample in the sense of 
Zhang \cite{Zh} (cf. Section \ref{section ample and big}), then it is big by the formula
$$\vol(\lbb)=\lbb^d>0.$$
This is a result combining the works of Gillet-Soul\'e \cite{GS3, GS2}, 
Bismut-Vasserot \cite{BV} and Zhang \cite{Zh}. See \cite[Corollary 2.7]{Yu} for example.

As pointed out above, Huayi Chen proved that ``limsup=lim'' in the definition of $\vol(\lbb)$ in his recent work \cite{Ch1} using Harder-Narasimhan filtrations. We will derive this result by means of Okounkov bodies in Theorem \ref{limit}. 

Some basic properties of big line bundles are proved in \cite{Mo2, Mo3} and \cite{Yu}. They use different definitions of bigness, but \cite[Corollary 2.4]{Yu} shows that all of these definitions are equivalent.
The result ``big=ample+effective" of \cite{Yu} rephrased in Theorem \ref{ample+effective} will be widely used in this paper.
It is also worth noting that the main result of Moriwaki \cite{Mo3} asserts that the volume function is continuous at all hermitian line bundles.

\subsection*{Okounkov Body}
Assume that $X$ is normal with smooth generic fibre $X_{\QQ}$. Let 
$$X \supset Y_1 \supset \cdots \supset Y_d$$
be a flag on $X$, where each $Y_i$ is a regular irreducible closed subscheme of codimension $i$ in $X$. 
We require that $Y_1$ is a vertical divisor lying over some finite prime $p$, and write $\mathrm{char}(Y.)=p$ in this case. We further require that the residue field 
of $Y_d$ is isomorphic to $\fp$.
There is a positive density of such prime $p$ for which $Y.$ exists.

Define a valuation map 
$$\nu_{Y.}=(\nu_1, \cdots, \nu_d): H^0(X,\lb)-\{0\} \rightarrow \ZZ^d$$
with respect to the flag $Y.$ as in \cite{LM}. We explain it here. 
For any nonzero $s\in H^0(X,\lb)$, we first set $\nu_1(s)=\ord_{Y_1}(s)$. 
Let $s_{Y_1}$ be a section of the line bundle $\ob(Y_1)$ with zero locus $Y_1$. Then 
$s_{Y_1}^{\otimes(-\nu_1(s))}s$ is nonzero on $Y_1$, and let $s_1=\left. \left(s_{Y_1}^{\otimes(-\nu_1(s))}s\right)\right|_{Y_1}$ be the restriction. 
Set $\nu_2(s)=\ord_{Y_2}(s_1)$. 
Continue this process on the section $s_1$ on $Y_2$, we can define $\nu_3(s)$ and thus $\nu_4(s), \cdots, \nu_d(s)$.

For any Hermitian line bundle $\lbb$ on $X$, denote 
$$v_{Y.}(\lbb)=\nu_{Y.}(\Hhat(X,\lbb)-\{0\})$$
to be the image in $\ZZ^d$.
\textbf{Note that we only pick up the image of the finite set $\Hhat(X,\lbb)-\{0\}$.}

Let $\Delta_{Y.}(\lbb)$ be the closure of 
$\Lambda_{Y.}(\lbb)=\displaystyle\bigcup_{m\geq 1}\frac{1}{m^d} v_{Y.}(m\lbb)$
in $\RR^d$. It turns out that $\Delta_{Y.}(\lbb)$ is a bounded convex subset of $\RR^d$ if non-empty. It has a finite volume under the Lebesgue measure of $\RR^d$.
See Lemma \ref{convex and bounded}. 
We have the following counterpart of \cite[Theorem A]{LM}.

\theoremstyle{plain}\newtheorem*{thma}{Theorem A}
\begin{thma}
If $\lbb$ is big, then
$$\lim_{p=\mathrm{char}(Y.)\rightarrow \infty} 
\vol(\Delta_{Y.}(\lbb)) \log p
=\frac{1}{d!}\vol(\lbb). $$
\end{thma}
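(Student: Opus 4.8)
The plan is to mimic the structure of the proof of Theorem A in Lazarsfeld–Musta\c t\v a, adapting it to the arithmetic setting where the key new feature is that each lattice point of $v_{Y.}(m\lbb)$ carries a "multiplicity" roughly equal to $\log p$ coming from the residue field $\FF_p$ of $Y_d$. Concretely, I would first establish the counting identity relating $\hhat(X,m\lbb)$ to the number of lattice points $\#v_{Y.}(m\lbb)$, times a factor reflecting the sizes of the successive residue fields along the flag; since we arranged $\kappa(Y_d)\cong\FF_p$ and each $Y_i$ is regular, filtering $\Hhat(X,m\lbb)$ by the value of $\nu_1$, then by $\nu_2$ on the successive restrictions, etc., the graded pieces are (up to the $\sup$-norm constraints) essentially $\FF_p$-lines, so that $\hhat(X,m\lbb)$ is comparable to $(\#v_{Y.}(m\lbb))\cdot\log p$ with a controlled error. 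This reduces the theorem to showing $\frac{1}{m^d}\#v_{Y.}(m\lbb)\to \vol(\Delta_{Y.}(\lbb))$ as $m\to\infty$, plus a uniformity statement in $p$.

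Second, I would prove the convergence $\frac{1}{m^d}\#v_{Y.}(m\lbb)\to d!\,\vol(\Delta_{Y.}(\lbb))$ for fixed $p$. This is the arithmetic analogue of the Okounkov-body volume computation: using that $\bigcup_m \frac1{m}v_{Y.}(m\lbb)$ generates (in an appropriate asymptotic sense) a sub-semigroup of $\ZZ^{d}\times\ZZ_{\geq 0}$ whose associated cone has $\Delta_{Y.}(\lbb)$ as a slice, one invokes the semigroup/cone lattice-point counting lemma (as in \cite{LM}, going back to Khovanskii) together with Lemma \ref{convex and bounded} that $\Delta_{Y.}(\lbb)$ is bounded and convex. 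The bigness hypothesis is what guarantees the semigroup is large enough that the limit equals the full Lebesgue volume of $\Delta_{Y.}(\lbb)$ rather than something degenerate.

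Third — and this is where the main work lies — I would control the dependence on $p$ and take the limit $p\to\infty$. By the earlier reduction, $\vol(\Delta_{Y.}(\lbb))\log p$ is, for each fixed $p$, equal to $\lim_m \frac{1}{m^d/d!}\big(\hhat(X,m\lbb)+O(\text{error}_p(m))\big)/d!$-type quantity, so the theorem amounts to showing the error terms are negligible \emph{uniformly enough} that the double limit $\lim_p\lim_m$ computes $\frac1{d!}\vol(\lbb)=\lim_m \frac{\hhat(X,m\lbb)}{m^d}$. The cleanest route is to use the result ``big = ample + effective'' (Theorem \ref{ample+effective}): write $\lbb$ as a sum of an ample $\lbb_0$ (with $\vol(\lbb_0)=\lbb_0^d$ computed by arithmetic Riemann–Roch) and an effective part, sandwich $\vol(\Delta_{Y.}(\lbb))\log p$ between volumes attached to $\lbb_0$ and to small perturbations, and use the continuity of the volume function together with explicit control — in the ample case the sup-norm sections are governed by $H^0(X,m\lb)$ of rank $\sim \lbb_0^d\, m^{d-1}/(d-1)!$ tensored with a lattice of covolume controlled by arithmetic RR, and passing to the flag replaces the archimedean covolume bookkeeping by the $\log p$ factor. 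The hard part is precisely making the comparison between the arithmetic volume (an archimedean/adelic covolume quantity) and $(\log p)\times(\text{a Euclidean lattice-point count})$ effective and uniform in $p$: one needs that the "loss" in restricting successively to $Y_1,\dots,Y_d$ and in the $\sup$-norm estimates is $o(m^d)$ with a bound independent of $p$ after dividing by $\log p$, which I expect to handle by an explicit filtration argument on $\Hhat(X,m\lbb)$ together with the positive-density existence of good flags to avoid bad primes.
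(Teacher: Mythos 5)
Your outline has the same skeleton as the paper: your step 2 is exactly Proposition \ref{volume of okounkov} (the semigroup conditions of \cite{LM}, checked using ``big = ample + effective'' and Zhang's theorem), and the theorem is indeed reduced to comparing $\hhat(X,m\lbb)$ with $\#v_{Y.}(m\lbb)\log p$. The genuine gap is in your steps 1 and 3, i.e.\ in that comparison, which is where all the work lies. First, your stated uniformity target is miscalibrated: you ask that the loss be $o(m^d)$ with a bound independent of $p$ (up to the $\log p$ normalization), but such a bound would force the exact equality $\vol(\Delta_{Y.}(\lbb))\log p=\frac1{d!}\vol(\lbb)$ for every fixed good $p$, which the paper expects not to hold in general for non-ample $\lbb$; what is actually provable, and what suffices, is an error of the shape $\frac{c(\lbb)}{\log p}m^d+o_p(m^d)$ with $c(\lbb)$ independent of $p$ (Theorem \ref{comparison level m}), so that only the double limit $\lim_p\lim_m$ gives the statement. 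Second, you never identify the mechanism producing the factor $1/\log p$: in the paper it comes from bounding, by intersection numbers with a fixed ample $\abb$, the number of indices $k$ for which the $\nu_1$-filtration piece $\Hhat(X,m\lbb(-kY_1))$ is nonzero; this count is at most roughly $[K:\QQ]\,m\,\lbb\cdot\abb^{d-1}/(\deg(\ab_\QQ)\log p)$ precisely because $Y_1\cdot\abb^{d-1}$ is proportional to $\log p$. Without this (or an equivalent) input, the limit in $p$ cannot be taken.

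Relatedly, the phrase ``the graded pieces are (up to the sup-norm constraints) essentially $\FF_p$-lines'' hides the real difficulty: $\Hhat(X,m\lbb)$ is a finite set cut out by the sup-norm, not a module, so counting fibers of reduction and restriction maps requires the two-sided estimates of Proposition \ref{reduction points} (with $\log 2$ metric shifts), the rescaling Lemma \ref{rescaling}, and the passage to the $\fwp$-span of $\Hhat(\cdot)|_{Y_1}$ in order to apply \cite[Lemma 1.3]{LM}, with the resulting sup-norm loss $\beta_m=p\,h^0(m\lb|_{Y_1})$ absorbed into $O(m^{d-1}\log m)$. Your suggested ``ample + effective sandwich plus continuity of the volume'' for step 3 is not carried out and does not obviously work (the Okounkov body has no established continuity in the line bundle, and exact equality is not available even in the ample case); you yourself fall back on an unspecified ``explicit filtration argument,'' which is exactly the missing content. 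Finally, fix the normalization slip in step 2: the limit of $\#v_{Y.}(m\lbb)/m^d$ is $\vol(\Delta_{Y.}(\lbb))$, not $d!\,\vol(\Delta_{Y.}(\lbb))$.
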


In the geometric case of \cite{LM}, exact equality between two volumes are easily obtained without taking limit on $p$. But it seems hard to be true in the arithmetic case if $\lbb$ is not ample. However, the above result is apparently sufficient for applications of $\vol(\lbb)$.

\subsection*{Log-concavity and Hodge index theorem}
As what Lazarsfeld and Musta\c t\v a do, we also show the log-concavity of volume functions by the classical Brunn-Minkowski theorem in Euclidean geometry. 

\theoremstyle{plain}\newtheorem*{thmb}{Theorem B}
\begin{thmb}
For any two effective line bundles $\lbb_1, \lbb_2$, we have
$$\vol(\lbb_1+\lbb_2)^{\frac 1d}\geq \vol(\lbb_1)^{\frac 1d}+ \vol(\lbb_2)^{\frac 1d}.$$
\end{thmb}

When $\lbb_1$ and $\lbb_2$ are ample, the above volumes are exactly equal to arithmetic intersection numbers. Even in this case, the inequality is not as transparent as the geometric case. The result can be viewed as a generalization of the Hodge index theorem on arithmetic surfaces. See \cite{La} for many related inequalities in the geometric case. 

An easy consequence of the above result is the relation
\begin{align*}
 (\lbb_1^{d-1} \cdot \lbb_2 )^2  
\geq & (\lbb_1^d  )  ( \lbb_1^{d-2} \cdot \lbb_2^2)
\end{align*}
on intersection numbers for any two ample line bundles $\lbb_1, \lbb_2$. 
This simple-looking relation is equivalent to the Hodge index theorem for divisors on arithmetic varieties, which was proved by Moriwaki \cite{Mo1}. See Corollary \ref{intersection} and Corollary \ref{hodge index}.

\subsection*{Arithmetic Fujita Approximation}
In the geometric case, one of the most important properties of big line bundles is Fujita's approximation in \cite{Fu}. It asserts that a big line bundle can be arbitrarily closely approximated by ample line bundles. In this way, many properties of ample line bundles can be carried to big line bundles. Our arithmetic analogue is as follows:

\theoremstyle{plain}\newtheorem*{thmc}{Theorem C}
\begin{thmc}
Let $\lbb$ be a big line bundle over $X$. 
Then for any $\epsilon>0$, there exist an integer $n>0$, a birational morphism $\pi: X'\rightarrow X$ from another arithmetic variety $X'$ to $X$, and an isomorphism  
$$n\ \pi^*\lbb= \abb+ \ebb$$
for an effective line bundle $\ebb$ on $X'$ and an ample line bundle $\abb$ on $X'$
satisfying
$$\frac{1}{n^d}\vol(\abb)> \vol(\lbb)-\epsilon.$$
\end{thmc}

Our proof of this theorem consists of a finite part and an infinite part. The finite part is an analogue of Theorem 3.3 in \cite{LM}, the infinite part is solved by taking pluri-subharmonic envelope which is well-known in complex analysis.

As pointed out at the beginning, Chen \cite{Ch2} also proves this result. His proof also relies on \cite{LM}. The difference between our approaches is that, he really applies the original Theorem 3.5 of \cite{LM} combining with Bost's slope theory, while here we prove an arithmetic analogue of the theorem. Furthermore, he does not obtain our Theorem A and Theorem B.

\subsection*{Notations}
We use $\pic,\amp,\bigx,\eff$ 
to denote respectively the \textbf{isometry classes} of hermitian line bundles, ample hermitian line bundles, big hermitian line bundles, effective hermitian line bundles. 

When we treat the valuation $\nu$, we always ignore the fact that the section 0 has no image. For example, 
$\nu(S)$ is understood as $\nu(S-\{0\})$ for any subset $S\subset H^0(X,\lb)$. 

For any smooth function $f: X(\CC)\rightarrow\RR$, denote by $\obb(f)=(\ob,e^{-f})$ the trivial bundle $\ob$ endowed with the metric given by $\|1\|=e^{-f}$. In particular, it makes sense if $f=\alpha\in \RR$ is a constant function.
For any vertical cartier divisor $V$ of $X$, denote by $\obb(V)=(\ob(V),\|\cdot\|)$ the line bundle $\ob(V)$ associated to $V$ with a metric given by $\|s_V\|=1$. Here $s_V$ denotes a fixed section defining $V$.  We further denote $\lbb(f+V)=\lbb+\obb(f)+\obb(V)$ for any hermitian line bundle $\lbb$.

\

\

{\footnotesize
\noindent\textit{Acknowledgements.} 
I am indebted so much to Robert Lazarsfeld for introducing his joint work with Mircea Musta\c t\v a to me, and for his hospitality during my visit at the University of Michigan at Ann Arbor. Almost all results of this paper are based on their work. 

I would also like to thank Shou-wu Zhang for many illustrating communications. Thanks also go to S\'ebastien Boucksom for pointing out the continuity property of the envelope, to Atsushi Moriwaki for pointing out a gap in an early version of the paper, and to Yuan Yuan for clarifying many concepts in complex analysis.
}

\

\

\section{The arithmetic Okounkov body}
The main goal of this section is to prove Theorem A. Section \ref{section ample and big} recalls some results on ample line bundles and big line bundles. After considering some easy properties followed from \cite{LM}, the proof of Theorem A is reduced to Theorem \ref{comparison level m} in Section \ref{section okounkov}. Then we prove Theorem \ref{comparison level m} in the next two subsections.

\

\subsection{Basics on arithmetic ampleness and bigness}\label{section ample and big}
We follow the arithmetic intersection theory of Gillet-Soul\'e \cite{GS1} and the notion of arithmetic ampleness
by Zhang \cite{Zh}. 

Recall that an arithmetic variety is an integral scheme, projective and flat over $\Spec(\ZZ)$. The dimension means the absolute dimension. Let $X$ be an arithmetic variety of dimension $d$. The notion of hermitian line bundles needs more  words if the complex space $X(\CC)$ is not smooth. 

A metrized line bundle $\lbb=(\lb, \|\cdot\|)$ over $X$ is an invertible sheaf $\mathcal{L}$ over $X$ together with a hermitian metric $\|\cdot\|$ on each fibre of $\mathcal{L}(\CC)$ over $X(\CC)$. We say this metric is smooth if the pull-back metric over $f^*\lb$ under any analytic map $f: B^{d-1} \rightarrow X(\CC)$ is smooth in the usual sense. Here $B^{d-1}$ denotes the unit ball in $\CC^{d-1}$.
We call $\lbb$ a hermitian line bundle if its metric is smooth and invariant under complex conjugation. For a hermitian line bundle $\lbb$, we say the metric or the curvature of $\lbb$ is semipositive if the curvature of $f^*\lb$ with the pull-back metric under any analytic map $f: B^{d-1} \rightarrow X(\CC)$ is semipositive definite.

A hermitian line bundle $\lbb$ over $X$ is called ample if the following three conditions are satisfied:
\begin{itemize}
\item[(a)] $\lb_\QQ$ is ample;

\item[(b)] $\lbb$ is relatively semipositive: the curvature of $\lbb$ is semipositive and $\deg(\lb|_C) \geq 0$ for any closed curve $C$ on any special fibre of $X$ over $\mathrm{Spec}(\mathbb{Z})$;

\item[(c)] $\lbb$ is horizontally positive: the intersection number $ (\lbb|_Y)^{\dim Y}>0$ for any horizontal irreducible closed subvariety $Y$.
\end{itemize}
Zhang proved an arithmetic Nakai-Moishezon theorem which includes a special case as follows:

\begin{thm}[\cite{Zh}, Corollary 4.8]\label{zhang}
Let $\lbb$ be an ample hermitian line bundle on an arithmetic variety $X$ such that $X_\QQ$ is smooth. Then for any hermitian line bundle $\ebb$ over $X$, the $\ZZ$-module $H^0(X,\eb+N\lb)$ has a basis consisting of \textit{strictly effective sections} for $N$ large enough. 
\end{thm}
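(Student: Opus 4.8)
The plan is to deduce this from Zhang's arithmetic Nakai–Moishezon theorem (his Corollary 4.8 in full generality) applied to the hermitian line bundle $\ebb + N\lbb$, together with an elementary argument to pass from "generated by small sections" to "has a basis of strictly effective sections". Recall that a section $s$ is \emph{strictly effective} if $\|s\|_{\sup} < 1$. What Zhang's theorem gives for $N$ large is that $\ebb + N\lbb$ is generated by its strictly effective sections — i.e.\ the strictly effective sections span $H^0(X, \eb + N\lb) \otimes \QQ$ and in fact generate the sheaf. The point is to upgrade this to an honest $\ZZ$-basis.

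**First I would** fix $N_0$ large enough that $\ebb + N_0\lbb$ is generated by strictly effective global sections (this is where Zhang's result enters; I may also want $\lbb$ itself, or some twist, to be generated by strictly effective sections, which again follows from ampleness for a suitable power). Let $M = H^0(X, \eb + N\lb)$, a finitely generated free $\ZZ$-module of some rank $r$. **Next** I would choose strictly effective sections $s_1, \dots, s_k$ that span $M \otimes \QQ$; after scaling/reindexing we can extract $r$ of them, say $s_1, \dots, s_r$, forming a $\QQ$-basis. These need not form a $\ZZ$-basis — they span a finite-index sublattice $M' \subseteq M$. **The key step** is to kill this index: because $\lbb$ is ample, $\lbb$ (or a bounded power, absorbed by increasing $N$) is generated by strictly effective sections as well, so multiplying the $s_i$ by suitable strictly effective sections of $\lbb$ and replacing $N$ by $N + (\text{const})$ we can arrange the images to generate $M$ over $\ZZ$; alternatively, since strict effectivity is an open condition, once $M \otimes \QQ$ is spanned by strictly effective sections one can perturb an arbitrary $\ZZ$-basis $e_1, \dots, e_r$ of $M$ by a small multiple of a strictly effective generating set to make each $e_i$ itself strictly effective while preserving the property of being a $\ZZ$-basis — an integral linear change of basis that is the identity modulo a large prime power does not change the lattice.

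**The main obstacle** I anticipate is precisely this last upgrade: Zhang's Nakai–Moishezon statement is naturally a statement about $\QQ$-ampleness / generation up to finite index and asymptotics, and squeezing out an exact $\ZZ$-basis of \emph{strictly} (not merely $\leq 1$) effective sections requires care that the $\sup$-norm estimates are uniform and that the index $[M : M']$ can be cleared without losing strict effectivity. I would handle this by noting strict effectivity is stable under small deformation: if $s$ is strictly effective and $t$ is any section, then $s + \tfrac1n t$ is strictly effective for $n \gg 0$; applying this with the $t$'s ranging over a $\ZZ$-basis of $M$ and using that $\{s_i + \tfrac1n e_j\}$ can be arranged (by Gaussian elimination over $\ZZ$, clearing denominators into a large power of a single auxiliary prime and absorbing it into $\lbb^{\otimes}$-twists) to be a $\ZZ$-basis, finishes the argument. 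A cleaner route, which I would present if it goes through, is: take any $\ZZ$-basis $e_1,\dots,e_r$ of $M=H^0(X,\eb+N\lb)$; since $\ebb+N\lbb$ is generated by strictly effective sections, write each $e_j$ as a $\ZZ$-combination, then observe that for $N' = N + N_1$ with $N_1$ large the sections $e_j \otimes u$, where $u$ is a single strictly effective section of $N_1\lbb$ with $\|u\|_{\sup}$ small, are strictly effective and still form a $\ZZ$-basis of the corresponding summand — but verifying they span all of $H^0(X,\eb+N'\lb)$ rather than a sublattice is exactly the subtle point, and will again invoke Zhang's theorem for the "difference" bundle.
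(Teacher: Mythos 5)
This statement is not proved in the paper at all: it is quoted verbatim as a special case of Zhang's arithmetic Nakai--Moishezon theorem, namely \cite{Zh}, Corollary 4.8, and the paper offers nothing beyond that citation. Your proposal instead reads Zhang's corollary as the weaker assertion that the strictly effective sections of $\ebb+N\lbb$ span $H^0(X,\eb+N\lb)\otimes\QQ$ (or generate the sheaf), and then tries to upgrade this to an honest $\ZZ$-basis by an elementary lattice argument. That upgrade is exactly where the proposal breaks down, and you have in fact identified the unresolved point yourself.

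Concretely: (i) the perturbation step fails, because $s+\tfrac1n t$ with $t$ in the lattice and $n>1$ is only a $\QQ$-section, not an element of $H^0(X,\eb+N\lb)$; the suggested repair --- clearing denominators into a prime power ``absorbed into $\lbb$-twists'' --- is not meaningful, since multiplying lattice vectors by an integer replaces the basis by a sublattice of index $n^r$, while twisting by $\lbb$ changes the module $H^0$ altogether; moreover an arbitrary $\ZZ$-basis $e_1,\dots,e_r$ may have enormous sup-norm, so no small perturbation can make it strictly effective. (ii) The index-killing step only produces elements of the image of the multiplication map $H^0(X,\eb+N\lb)\otimes H^0(X,N_1\lb)\to H^0(X,\eb+(N+N_1)\lb)$, which in general is a proper sublattice (possibly not even spanning over $\QQ$), so it cannot simply be ``arranged'' to generate over $\ZZ$ --- as you concede in your final paragraph. (iii) Even granting that the full lattice is generated over $\ZZ$ by strictly effective sections, extracting a $\ZZ$-basis of strictly effective sections is not a formality: a lattice generated by vectors in the open unit ball of a norm need not admit a basis inside that ball; general reduction theory only bounds basis vectors by rank-dependent multiples of the successive minima. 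So either one invokes \cite{Zh}, Corollary 4.8 in the form stated --- which is what the paper does, leaving nothing to prove --- or one must genuinely redo Zhang's analytic/arithmetic argument; the proposed bridge from ``generated by small sections'' to ``basis of small sections'' does not close the gap.
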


Here an effective section is a nonzero section with
supremum norm less than or equal to 1. If the
supremum norm of the section is less than 1, the section and the
line bundle are said to be strictly effective.

As for big line bundles, we need the following result.

\begin{thm}[\cite{Yu}, Theorem 2.1] \label{ample+effective}
A hermitian line bundle $\lbb$ on $X$ is big if and only if $N \lbb=\abb+\ebb$ for some integer $N>0$, some
$\abb \in \amp$ and some $\ebb\in \eff$.
\end{thm}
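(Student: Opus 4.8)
\emph{The converse (the easy half).} Suppose $N\lbb=\abb+\ebb$ with $\abb\in\amp$ and $\ebb\in\eff$. I would fix a nonzero $s_\ebb\in\Hhat(X,\ebb)$ and, for each $m\ge1$, use multiplication by $s_\ebb^{\otimes m}$ to embed $\Hhat(X,m\abb)$ into $\Hhat(X,m\abb+m\ebb)=\Hhat(X,mN\lbb)$; the only point to check is $\|s\cdot s_\ebb^{\otimes m}\|_{\sup}\le\|s\|_{\sup}$, which follows from $\|s_\ebb\|_{\sup}\le1$. Hence $\hhat(X,mN\lbb)\ge\hhat(X,m\abb)$, and dividing by $m^d/d!$ and taking $\limsup$ gives $N^d\vol(\lbb)\ge\vol(N\lbb)\ge\vol(\abb)$, where the first inequality is immediate from the definition of $\vol$. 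Since $\vol(\abb)=\abb^d>0$ by the formula recalled in the introduction, $\vol(\lbb)>0$, i.e.\ $\lbb$ is big.

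\emph{The main direction.} Assume $\lbb$ big and fix any $\abb_0\in\amp$. First I would make two reductions. (i) It suffices to find one integer $N>0$ with $N\lbb-\abb_0$ big: for a big $\nbb$ the definition of $\vol$ gives some $M$ with $\hhat(X,M\nbb)>0$, i.e.\ $M\nbb\in\eff$, and then $MN\lbb=M\abb_0+M(N\lbb-\abb_0)$ is the decomposition sought, with $M\abb_0\in\amp$. (ii) Since $\frac1{N^d}\vol(N\lbb-\abb_0)=\vol(\lbb-\frac1N\abb_0)$, it suffices that $\vol(\lbb-\frac1N\abb_0)>0$ for some large $N$. The quickest way to reach (ii) is to invoke the continuity of the arithmetic volume function (Moriwaki \cite{Mo3}, recalled in the introduction): $\vol(\lbb-t\,\abb_0)$ is continuous in $t$ and equals $\vol(\lbb)>0$ at $t=0$, hence stays positive for all small rational $t>0$, in particular for $t=1/N$ with $N$ large. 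That would complete the proof.

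\emph{A self-contained alternative, and the main obstacle.} If one wants to avoid \cite{Mo3}, I would instead run the classical geometric mechanism: choose $n$ so that $n\abb_0$ admits a strictly effective section $s_0$ (Theorem \ref{zhang} with trivial auxiliary bundle), put $D=\divv(s_0)$, an effective Cartier divisor of dimension $d-1$, and use the restriction sequence $0\to H^0(X,N\lb-n\ab_0)\xrightarrow{\;\cdot s_0\;}H^0(X,N\lb)\xrightarrow{\;r\;}H^0(D,(N\lb)|_D)$. Restriction does not increase sup-norms, so $r$ maps $\Hhat(X,N\lbb)$ into $\Hhat(D,N\lbb|_D)$, a finite set of size $\exp(O(N^{d-1}\log N))$ by the elementary upper bound on a $(d-1)$-dimensional arithmetic scheme; since $\hhat(X,N\lbb)\gg N^d$ by bigness, $r$ is far from injective on $\Hhat(X,N\lbb)$ for $N$ large, so $s_0\cdot H^0(X,N\lb-n\ab_0)$ contains exponentially-in-$N^d$ many points inside a fixed sup-norm ball, and a Minkowski/successive-minima estimate on this rank-$O(N^{d-1})$ lattice would produce a short nonzero vector $s_0\cdot t$. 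The crux — and why for this paper I would prefer the continuity route — lies exactly here: a small bound on $\|s_0\cdot t\|_{\sup}$ does \emph{not} give a small bound on $\|t\|_{\sup}$, since $s_0$ vanishes along $D(\CC)$ and division amplifies near there. Making this step rigorous is the only real work; one must either take a reduction divisor whose defining section is nowhere zero on $X(\CC)$ — which forces the reduction bundle to be trivial on $X(\CC)$ (as with the convention $\obb(V)$ of the Notations), hence non-ample — or interpose a uniform $L^2$/Gromov comparison bounding $\|t\|_{\sup}$ in terms of $\|s_0\cdot t\|_{\sup}$.
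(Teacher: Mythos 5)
Your argument is correct, but note first that the paper itself contains no proof of this statement to compare against: it is quoted directly from \cite{Yu}, Theorem 2.1. Your easy direction is the standard one and is fine. For the hard direction you reduce to the bigness of $N\lbb-\abb_0$ for $N$ large and settle it by Moriwaki's continuity of the arithmetic volume function \cite{Mo3}. That is legitimate and not circular (Moriwaki's continuity does not pass through the present statement, and \cite{Mo3} is already an input to this paper), and the form of continuity you need, $\vol(N\lbb-\abb_0)/N^d\rightarrow\vol(\lbb)$, is exactly what \cite{Mo3} supplies; your intermediate expression $\vol(\lbb-\frac1N\abb_0)$ tacitly uses homogeneity of $\vol$, which is also proved there, so this is a harmless but worth-flagging point. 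The trade-off is that you invoke a theorem substantially deeper than the one being proved.

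The proof in \cite{Yu} is more elementary and, notably, it resolves precisely the obstacle you diagnose in your self-contained alternative, by reversing the direction of the norm comparison. From $\vol(\lbb)>0$ one extracts: (a) $\lb_\QQ$ is big, via a standard bound of the shape $\hhat(m\lbb)\le h^0(m\lb_\QQ)\cdot O(m)$, which forces $\vol(\lbb)=0$ if the generic fibre were not big; and (b) a \emph{strictly} effective section $s$ of some multiple $N_0\lbb$, obtained from $\hhat(N_0\lbb)\gg N_0^{d}$ together with a rescaling estimate of the type of Lemma \ref{rescaling}, which gives $\hhat(N_0\lbb(-1))>0$. Now fix an ample $\abb$; by (a) there is a nonzero integral section $t$ of $kN_0\lb-\ab$ for some $k$, with no control on $\|t\|_{\sup}$, and one simply multiplies: $\|t\otimes s^{\otimes j}\|_{\sup}\le\|t\|_{\sup}\,\|s\|_{\sup}^{\,j}\le 1$ for $j$ large, so $(k+j)N_0\lbb=\abb+\ebb$ with $\ebb$ effective. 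So the missing idea in your lattice-and-restriction sketch is not an $L^2$/Gromov comparison: rather than dividing a small section by an effective one (which inflates sup-norms near its complex zero locus), one multiplies an arbitrary generic section by high powers of a strictly effective section of a multiple of $\lbb$ itself, letting the archimedean contraction $\|s\|_{\sup}<1$ absorb the size of $t$.
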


Fujita's approximation roughly says that we can make the ample part $\displaystyle\frac{1}{N}\abb$ arbitrarily close to $\lbb$.

In the end, we quote a theorem of Moriwaki which says that the volume function is invariant under birational morphisms. We need it when we use generic resolution of singularities.

\begin{thm}[\cite{Mo3}, Theorem 4.2]\label{moriwaki}
Let $\pi: \widetilde X\rightarrow X$ be a birational morphism of arithmetic varieties. Then for any hermitian line bundle $\lbb\in \pic$, we have
$$\vol(\pi^* \lbb)=\vol(\lbb).$$ 
\end{thm}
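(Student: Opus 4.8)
The plan is to prove the nontrivial inequality $\vol(\pi^*\lbb)\le\vol(\lbb)$; since $\pi$ is dominant the opposite inequality is immediate and both volumes are $\ge0$, so this suffices and no bigness hypothesis is needed. For the easy inequality, for every $m$ the pull-back $\pi^*\colon H^0(X,m\lb)\to H^0(\widetilde X,m\pi^*\lb)$ is injective, and since $\pi$ is proper and surjective the induced map $\widetilde X(\CC)\to X(\CC)$ is surjective, so $\|\pi^*s\|_{\sup}=\|s\|_{\sup}$; hence $\pi^*$ embeds $\Hhat(X,m\lbb)$ into $\Hhat(\widetilde X,m\pi^*\lbb)$, giving $\hhat(X,m\lbb)\le\hhat(\widetilde X,m\pi^*\lbb)$ and $\vol(\lbb)\le\vol(\pi^*\lbb)$. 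The same observation applies to any dominant morphism, in particular to a normalization map.

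The core case is $X$ normal, which is anyway the only one needed in this paper (generic resolutions are performed over a normal base). Here $\pi_*\ob_{\widetilde X}=\ob_X$: the sheaf $\pi_*\ob_{\widetilde X}$ is coherent since $\pi$ is proper, and is a sheaf of $\ob_X$-subalgebras of the constant sheaf $K(X)$ since $\pi$ is birational, hence it is integral over $\ob_X$ and so equals $\ob_X$ by normality. By the projection formula $\pi_*(m\pi^*\lb)=m\lb\otimes\pi_*\ob_{\widetilde X}=m\lb$, so $\pi^*\colon H^0(X,m\lb)\to H^0(\widetilde X,m\pi^*\lb)$ is an \emph{isomorphism}; combined with $\|\pi^*s\|_{\sup}=\|s\|_{\sup}$ this gives $\Hhat(X,m\lbb)=\Hhat(\widetilde X,m\pi^*\lbb)$ for all $m$, hence $\vol(\pi^*\lbb)=\vol(\lbb)$.

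To obtain the general case I would reduce to the normal one. Let $n\colon X^{\nu}\to X$ and $\widetilde n\colon\widetilde{X}^{\nu}\to\widetilde X$ be the normalizations; these are finite morphisms and $X^{\nu},\widetilde{X}^{\nu}$ are again arithmetic varieties. Since $\widetilde{X}^{\nu}$ is normal and dominates $X$, the composite $\pi\circ\widetilde n$ factors through $X^{\nu}$, giving a proper birational $\sigma\colon\widetilde{X}^{\nu}\to X^{\nu}$ with normal target and $n\circ\sigma=\pi\circ\widetilde n$; hence
$$\vol(\pi^*\lbb)\ \le\ \vol(\widetilde n^*\pi^*\lbb)\ =\ \vol(\sigma^*n^*\lbb)\ =\ \vol(n^*\lbb),$$
by the easy inequality for $\widetilde n$ and the normal case for $\sigma$, and it remains to show $\vol(n^*\lbb)\le\vol(\lbb)$. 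For this I would use the conductor ideal $\mathfrak c\subseteq\ob_X$ of the normalization, so $\mathfrak c\cdot n_*\ob_{X^{\nu}}\subseteq\ob_X$: fixing an ample $\ob_X(1)$ with a smooth metric and an integer $k$ with $0\ne c\in H^0(X,\mathfrak c\otimes\ob_X(k))$, multiplication by $c$ embeds $H^0(X^{\nu},m\,n^*\lb)=H^0(X,m\lb\otimes n_*\ob_{X^{\nu}})$ into $H^0(X,m\lb\otimes\ob_X(k))$ and scales sup-norms by at most $\|c\|_{\sup}$, so $\hhat(X^{\nu},m\,n^*\lbb)\le\hhat(X,m\lbb+\mbb)$ for the \emph{fixed} hermitian line bundle $\mbb=\overline{\ob_X(k)}+\obb(\log\|c\|_{\sup})$. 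The proof then closes by the comparison $\limsup_m\hhat(X,m\lbb+\mbb)/(m^d/d!)\le\vol(\lbb)$.

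The step I expect to be the genuine obstacle is this last comparison: the leading term of $\hhat$ is insensitive to a fixed twist — concretely $\hhat(X,m\lbb+\mbb)=\hhat(X,m\lbb)+O(m^{d-1})$. It is of geometry-of-numbers type: twisting the underlying line bundle by a fixed one changes $H^0$ by a $\ZZ$-module supported on a divisor, of rank $O(m^{d-1})$, while moving the metric by a bounded amount changes the lattice-point count by $O(\rank\,H^0(X,m\lb))=O(m^{d-1})$; one must also track the finite $p$-parts contributed by the non-normal locus, a scheme of dimension $\le d-1$. Morally the estimate holds because a section on $X^{\nu}$ with bounded sup-norm is a bounded holomorphic section over the complement of a set of complex codimension $\ge1$, hence extends by Riemann extension, and only its possible failure to extend \emph{integrally} is at issue; when $\lbb$ is big one can sidestep the estimate altogether by absorbing $\mbb$ into an ample multiple $K\lbb=\abb+\ebb$ (Theorem \ref{ample+effective}, plus Theorem \ref{zhang} to make $\ell\abb-\mbb$ effective) and getting $\hhat(X,m\lbb+\mbb)\le\hhat(X,(m+K\ell)\lbb)$. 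All the rest — the easy inequality, $\pi_*\ob_{\widetilde X}=\ob_X$, the projection formula, and the normalization reduction — is routine.
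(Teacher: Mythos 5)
You should know at the outset that the paper contains no proof of Theorem \ref{moriwaki}: it is quoted verbatim from Moriwaki \cite{Mo3}, Theorem 4.2, so there is no internal argument to compare yours with, and your proposal has to stand on its own. Much of it does: the easy inequality $\vol(\lbb)\le\vol(\pi^*\lbb)$ (injectivity of $\pi^*$ plus surjectivity of $\widetilde X(\CC)\to X(\CC)$, hence equality of sup norms), the normal-target case via $\pi_*\ob_{\widetilde X}=\ob_X$ and the projection formula, and the reduction through the normalizations and the conductor are all correct, and they correctly funnel everything into one statement: for a single fixed effective twist $\mbb$ one needs $\limsup_m \hhat(X,m\lbb+\mbb)\,d!/m^d\le\vol(\lbb)$. (One side remark is wrong: the normal case is not ``the only one needed in this paper'' --- Section 3 applies the theorem to the normalization of a generic resolution of an arbitrary, possibly non-normal and generically singular $X$, so the general case is exactly what is used.)

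The genuine gap is that this fixed-twist estimate is the entire difficulty, and neither of your two suggestions closes it. The claimed bound $\hhat(X,m\lbb+\mbb)=\hhat(X,m\lbb)+O(m^{d-1})$ does not follow from ``rank $O(m^{d-1})$ plus a bounded metric move'': Lemma \ref{rescaling} controls a metric change by a \emph{constant} factor, but multiplication by the section $c$ rescales the sup norm by $\|c\|$, which tends to $0$ along the horizontal divisor $\divv(c)(\CC)$; on the sublattice $c\cdot H^0(X,m\lb)$ the new unit ball is therefore unboundedly larger than the old one in the directions of sections peaking near $\divv(c)(\CC)$, and bounding the resulting surplus of lattice points is an $\hhat$-versus-$\chi$ comparison that requires the positivity machinery of Gillet--Soul\'e, Zhang, \cite{Yu} or \cite{Mo3} --- note that the paper's own Theorem \ref{comparison level m} obtains this kind of control only for \emph{vertical} divisors, precisely because there the metric is untouched. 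Your fallback for big $\lbb$ is also unavailable where it is needed: Theorem \ref{zhang} as quoted requires $X_\QQ$ smooth, while the estimate must be run on the original non-normal, possibly generically singular $X$ (passing to a generic resolution to restore smoothness would re-invoke the very invariance being proved); moreover the absorption needs bigness of $\lbb$ on $X$ itself, not merely of $n^*\lbb$, and the non-big case of the theorem would remain untreated. So the proposal is a clean reduction, but its crux --- supplied in the literature by Moriwaki's continuity estimates in the very paper \cite{Mo3} cited here --- is left unproved.
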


\

\subsection{Volumes of Okounkov bodies}\label{section okounkov}
We always assume $X$ to be normal with smooth generic fibre when we consider Okounkov bodies.
Recall that for the flag
$$X \supset Y_1 \supset Y_2 \supset \cdots \supset Y_d,$$
we require that $Y_1$ is vertical over some prime $p$ and the residue field of $Y_d$ is isomorphic to $\fp$. 
This is not essential, but we will only stick on this case for simplicity. We will first explain why such $p$ has a positive density.

Let $K$ be the largest algebraic number field contained in the fraction field of $X$. Then the structure morphism $X\rightarrow\Spec(\ZZ)$ factors through $X\rightarrow\Spec(O_K)$ where $O_K$ is the ring of integers of $K$, and $X$ is geometrically connected over $O_K$. It follows that the fiber $X_{\wp}$ over any prime ideal $\wp$ of $O_K$ is connected. It is smooth for almost all $\wp$.
We must have $Y_1=X_{\wp}$ for some $\wp$ lying over $p$. 

We require that the residue field $\fwp=O_K/\wp$ is isomorphic to $\fp$. For example, it is true if $p$ splits completely in $O_K$, and it happens with a positive density by Chebotarev's density theorem. Once this is true, 
it is easy to choose $Y_2, Y_3, \cdots, Y_{d-1}$. The existence of a point $Y_d$ follows from Weil's conjecture for curves over finite fields. 

We start with some basic properties of the Okounkov body we defined.
Recall that 
$$\nu=\nu_{Y.}=(\nu_1, \cdots, \nu_d): H^0(X,\lb)-\{0\} \rightarrow \ZZ^d$$
is the corresponding valuation map, 
$$v(\lbb)=v_{Y.}(\lbb)=\nu_{Y.}(\Hhat(X,\lbb)-\{0\})$$
is the image in $\ZZ^d$, and $\Delta=\Delta_{Y.}(\lbb)$ is the closure of $\Lambda=\Lambda_{Y.}(\lbb)=\displaystyle\bigcup_{m\geq 1}\frac{1}{m^d} v(m\lbb)$ in $\RR^d$.

\begin{lem}\label{convex and bounded}
The Okounkov body $\Delta_{Y.}(\lbb)$ is convex and bounded for any $\lbb\in\pic$.
\end{lem}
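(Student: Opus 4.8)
The plan is to handle convexity (which will be formal) and boundedness (which carries the arithmetic content) separately, both starting from the graded semigroup
\[
\widehat\Gamma(\lbb)=\bigl\{\,(m,\nu_{Y.}(s))\ :\ m\ge1,\ s\in\Hhat(X,m\lbb)\setminus\{0\}\,\bigr\}\ \subset\ \ZZ_{\ge1}\times\ZZ^d .
\]
First I would check that $\widehat\Gamma(\lbb)$ is a sub-semigroup: if $s_i\in\Hhat(X,m_i\lbb)\setminus\{0\}$ ($i=1,2$), then $s_1\otimes s_2$ is a nonzero section of $(m_1+m_2)\lb$ with $\|s_1\otimes s_2\|_{\sup}\le\|s_1\|_{\sup}\|s_2\|_{\sup}\le1$ (pointwise $|s_1\otimes s_2|=|s_1|\,|s_2|$ on $X(\CC)$), hence $s_1\otimes s_2\in\Hhat(X,(m_1+m_2)\lbb)$, while $\nu_{Y.}(s_1\otimes s_2)=\nu_{Y.}(s_1)+\nu_{Y.}(s_2)$ since each component $\nu_i$ is a valuation, additive on products (immediate from the inductive definition). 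As $v_{Y.}(m\lbb)$ is the $m$-th slice $\{\vec a:(m,\vec a)\in\widehat\Gamma(\lbb)\}$, the body $\Delta_{Y.}(\lbb)$ is the closure of $\Lambda_{Y.}(\lbb)=\bigcup_{m\ge1}\tfrac1m v_{Y.}(m\lbb)$.

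For convexity I would run the standard argument of \cite{LM}. Assuming $\Delta_{Y.}(\lbb)\ne\emptyset$, take points $\vec x=\tfrac1{m_1}\vec a$ and $\vec y=\tfrac1{m_2}\vec b$ of $\Lambda_{Y.}(\lbb)$, with $(m_1,\vec a),(m_2,\vec b)\in\widehat\Gamma(\lbb)$. For integers $j,\ell\ge0$ not both zero, $\widehat\Gamma(\lbb)$ contains $j(m_1m_2,m_2\vec a)+\ell(m_1m_2,m_1\vec b)=\bigl((j+\ell)m_1m_2,\ j m_2\vec a+\ell m_1\vec b\bigr)$, so $\tfrac{j}{j+\ell}\vec x+\tfrac{\ell}{j+\ell}\vec y\in\Lambda_{Y.}(\lbb)$. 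Letting $j,\ell$ vary, all rational convex combinations of $\vec x$ and $\vec y$ lie in $\Lambda_{Y.}(\lbb)$, hence all convex combinations lie in $\Delta_{Y.}(\lbb)$; approximating arbitrary points of $\Delta_{Y.}(\lbb)$ by points of $\Lambda_{Y.}(\lbb)$ then gives convexity.

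For boundedness I would exhibit a constant $C>0$, independent of $m$, with $\nu_{Y.}(s)\in[0,Cm]^d$ for all $s\in\Hhat(X,m\lbb)\setminus\{0\}$; then $\Lambda_{Y.}(\lbb)\subset[0,C]^d$, and so is its closure. The components $\nu_2,\dots,\nu_d$ of $\nu_{Y.}(s)$ are, by construction, the coordinates of the geometric Okounkov valuation of the restricted section $\bigl(s_{Y_1}^{-\nu_1(s)}\otimes s\bigr)|_{Y_1}\in H^0(Y_1,m\lb|_{Y_1})$ relative to the flag $Y_1\supset\cdots\supset Y_d$ of the $(d-1)$-dimensional projective $\fp$-variety $Y_1$, hence are $\le Cm$ by the boundedness of geometric Okounkov bodies \cite{LM} (or directly, by bounding $\nu_i(s)$ by $m$ times the degree of $\lb|_{Y_{i-1}}$ along a fixed curve meeting $Y_i$ properly). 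The remaining component $\nu_1(s)=\ord_{Y_1}(s)$ is the multiplicity of the fibre $Y_1$ in the arithmetic divisor $\widehat{\divv}(s)=\bigl(\divv(s),-\log\|s\|^2\bigr)$, which is effective because $\|s\|_{\sup}\le1$; intersecting $\widehat{\divv}(s)\ge\nu_1(s)\,\widehat{Y_1}$ with a horizontal arithmetic curve $C$ along which $s$ does not vanish yields $m\,\widehat{\deg}(\lbb|_C)=(\widehat{\divv}(s)\cdot C)\ge\nu_1(s)\,(Y_1\cdot C)\ge\nu_1(s)\log p$, i.e.\ $\nu_1(s)\le m\,\widehat{\deg}(\lbb|_C)/\log p$.

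The step I expect to be the main obstacle is the uniformity of this last bound in $m$ and in $s$: it involves the $\lbb$-height $\widehat{\deg}(\lbb|_C)$ of the auxiliary curve $C$, and one must guarantee that for every nonzero section of every $m\lb$ there is a choice of $C$ — not contained in $\divv(s)$ — with $\widehat{\deg}(\lbb|_C)$ below a fixed bound. I would handle this by reducing first to the case $\lbb$ ample: if $\abb$ is ample with $\abb-\lbb$ ample, Theorem \ref{zhang} provides a fixed strictly effective $t\in H^0(X,N(\abb-\lbb))$, and $s\mapsto s^{\otimes N}\otimes t^{\otimes m}$ embeds $\Hhat(X,m\lbb)\setminus\{0\}$ into $\Hhat(X,mN\abb)\setminus\{0\}$ with $N\nu_{Y.}(s)=\nu_{Y.}(s^{\otimes N}\otimes t^{\otimes m})-m\,\nu_{Y.}(t)$, so $\Delta_{Y.}(\lbb)\subset\Delta_{Y.}(\abb)-\tfrac1N\nu_{Y.}(t)$; it then suffices to bound $\Delta_{Y.}(\abb)$ for ample $\abb$, where the needed input — that the smallest supremum norm of a nonzero section of $m\abb$ is at least $e^{-Cm}$, equivalently that $\abb-\obb(t)$ fails to be effective once $t$ exceeds a finite threshold — is a standard consequence of the arithmetic positivity recalled in Section \ref{section ample and big}.
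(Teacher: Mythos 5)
Your convexity argument and your bound on the coordinates $\nu_2,\dots,\nu_d$ (reading them as the geometric Okounkov valuation on $Y_1$ for the flag $Y_1\supset\cdots\supset Y_d$ and citing \cite{LM}) coincide with the paper's proof. The divergence, and the gap, is in the bound on $\nu_1$. You abandon the curve-intersection route (rightly, for the uniformity reason you flag) and fall back on: reduce to $\abb$ ample, then use that the smallest sup norm of a nonzero section of $m\abb$ is at least $e^{-Cm}$, ``equivalently'' that archimedean twists $\abb-\obb(t)$ stop being effective. But what you must rule out is a nonzero $s\in\Hhat(X,m\abb)$ with $\ord_{Y_1}(s)=k$ large, i.e.\ effectivity of $m\abb-k\,\obb(Y_1)$, a twist by the \emph{vertical fibre} $Y_1=X_\wp$; this is literally equivalent to your archimedean statement only when $\obb(Y_1)\cong\obb(\log p)$, e.g.\ when $K=\QQ$ (then $s=p^k s'$ with $\|s'\|_{\sup}\le p^{-k}$). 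In the paper's setting $\wp\subset O_K$ need not be principal, and $\ord_{Y_1}(s)\ge k$ only gives $s\in\wp^k H^0(X,m\lb)$; to reach your minimal-norm input you still need an extra step (e.g.\ pick by Minkowski a nonzero $\alpha\in\wp^{-k}$ with all archimedean absolute values at most $c_K p^{-k/[K:\QQ]}$, so that $\alpha s$ is a nonzero section of $m\ab$ of sup norm at most $c_K p^{-k/[K:\QQ]}$). As written, that ``equivalently'' is a genuine missing step, and it is exactly the arithmetic heart of the lemma. Your reduction to the ample case also quietly assumes the existence of an ample $\abb$ with $\abb-\lbb$ ample, which is true but not free.

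Compare with the paper's one-line argument, which needs none of this machinery: for any fixed ample $\abb$ and any nonzero $s\in\Hhat(X,m\lbb)$ one has $m\,\lbb\cdot\abb^{d-1}=(\abb|_{\divv(s)})^{d-1}-\int_{X(\CC)}\log\|s\|\;c_1(\abb)^{d-1}\ \ge\ \nu_1(s)\,(\abb|_{Y_1})^{d-1}$, because $\divv(s)\ge\nu_1(s)Y_1$ as effective cycles, $-\log\|s\|\ge 0$ by $\|s\|_{\sup}\le 1$, and $\abb$ is nef against effective cycles; hence $\nu_1(s)\le m\,\lbb\cdot\abb^{d-1}/(\abb|_{Y_1})^{d-1}$ uniformly in $s$ and $m$. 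In other words, the same positivity you intended to exploit is paired against $\abb^{d-1}$ rather than against an auxiliary horizontal curve, which eliminates your ``main obstacle'' at the source and works directly for arbitrary $\lbb$, with no reduction to the ample case and no minimal-norm estimate.
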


\begin{proof}
We first show convexity. By taking limit, it suffices to show that $\sum_{i=0}^k a_i x_i \in \Lambda$ for all  $x_i \in \Lambda$ and $a_i \in \QQ_{> 0}$ satisfying $\sum_{i=0}^k a_i =1$. 
Assume that $x_i$ comes from the section $s_i\in \Hhat(X,m_i\lbb)$.
Let $N$ be a positive common denominator of $a_i/m_i$, and write $a_i/m_i=b_i/N$. Then the section 
$$\bigotimes_{i=0}^k s_i^{\otimes b_i} \in \Hhat(X, N\lbb)$$
gives the point $\sum_{i=0}^k a_i x_i \in \Lambda$.

Next we show boundedness. Similar to \cite{LM}, we will show that there exists an integer $b>0$ such that 
\begin{equation} \label{bound}
\nu_i(s)\leq mb, \quad \forall \ s\in \Hhat(X, m\lbb)-\{0\}, \ i=1,\cdots, d. 
\end{equation}
The valuation $(\nu_2,\cdots, \nu_d)$ is exactly the valuation of dimension $d-1$ with respect to the flag 
$Y_1 \supset Y_2 \supset \cdots \supset Y_d$ on the ambient variety $Y_1$. Hence, the bound of $\nu_i$ for $i>1$
follows from \cite[Proposition 2.1]{LM}. It remains to bound $\nu_1$. 

Fix an ample line bundle $\abb$ on $X$. For any $s\in \Hhat(X, m\lbb)-\{0\}$, we have
$$ \lbb\cdot \abb^{d-1}=
\frac 1m  (\abb|_{\divv(s)})^{d-1}
-\frac 1m \int_{X(\CC)}\log\|s\| c_1(\mbb)^{d-1}
\geq \frac{\nu_1(s)}{m}  (\abb|_{Y_1})^{d-1}. $$
Thus we get a bound
$$\nu_1(s)\leq \frac{ \lbb\cdot \abb^{d-1}}{ (\abb|_{Y_1})^{d-1}}m. $$
\end{proof}

It is natural to describe the volume of the Okounkov body in terms of the order of the images of the valuations. The proof is actually an argument of Okounkov \cite{Ok2} using some results of Khovanskii \cite{Kh} in convex geometry, but we will only refer to the setting of \cite{LM} below.

\begin{pro}\label{volume of okounkov}
If $\lbb$ is big, then
$$\lim_{m\rightarrow \infty} \frac{\# v_{Y.}(m\lbb)}{m^d}=\vol(\Delta_{Y.}).$$
\end{pro}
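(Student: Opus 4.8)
The plan is to read the statement off from the combinatorial counting theorem of Okounkov and Khovanskii, in the semigroup formulation used in \cite[Section 2]{LM}. Introduce the graded semigroup
$$\Gamma=\Gamma_{Y.}(\lbb):=\{0\}\cup\bigl\{(\nu_{Y.}(s),m)\ :\ m\geq 1,\ s\in\Hhat(X,m\lbb)-\{0\}\bigr\}\ \subset\ \NN^{d}\times\NN,$$
using that each $\nu_i\geq 0$. This is genuinely a sub-semigroup of $\NN^{d+1}$: if $s\in\Hhat(X,m\lbb)$ and $t\in\Hhat(X,m'\lbb)$ then $\|s\otimes t\|_{\sup}\leq\|s\|_{\sup}\|t\|_{\sup}\leq 1$, so $s\otimes t\in\Hhat(X,(m+m')\lbb)$, while $\nu_{Y.}(s\otimes t)=\nu_{Y.}(s)+\nu_{Y.}(t)$ because $\ord_{Y_1}$ is a valuation and the successive restriction $s\mapsto s_i$ entering the definition of $\nu_{Y.}$ is multiplicative (the computation is the same as in \cite{LM}, and is already implicit in the proof of Lemma \ref{convex and bounded}). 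Its level-$m$ slice is $\Gamma_m=v_{Y.}(m\lbb)$, so that $\#\Gamma_m=\#v_{Y.}(m\lbb)$, and the closed convex cone $\Sigma(\Gamma)\subset\RR^{d+1}$ it spans satisfies $\Sigma(\Gamma)\cap(\RR^d\times\{1\})=\Delta_{Y.}(\lbb)$.

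The Okounkov--Khovanskii theorem then gives $\#\Gamma_m/m^d\to\vol\bigl(\Sigma(\Gamma)\cap(\RR^d\times\{1\})\bigr)$ as soon as three hypotheses are verified, and the content is in checking them. Boundedness of $\Delta_{Y.}(\lbb)$, equivalently $\Sigma(\Gamma)\cap(\RR^d\times\{0\})=\{0\}$, is exactly Lemma \ref{convex and bounded}. For nonemptiness of $\Gamma_m$ for all $m\gg 0$, use that $\lbb$ is big: by Theorem \ref{ample+effective} write $N\lbb=\abb+\ebb$ with $\abb$ ample and $\ebb$ effective, and for each residue $r\in\{0,\dots,N-1\}$ apply Theorem \ref{zhang} with the hermitian bundle $r\lbb$ to obtain a strictly effective section of $q\abb+r\lbb$ for all $q\geq q_0$, uniformly in $r$; tensoring with the $q$-th power of an effective section of $\ebb$ yields an effective section of $(qN+r)\lbb$, so $\hhat(X,m\lbb)>0$ for every $m\geq q_0 N$. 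The third hypothesis, that $\Gamma$ generates $\ZZ^{d+1}$ as a group, is the one I expect to be the main obstacle.

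For the lattice condition I would work at a large level $m$ and exhibit inside $\Gamma$ enough elements to span. In the $\nu_1$-direction: since $\lbb$ is big, for suitable $m\gg 0$ there is a nonzero $s\in H^0(X,m\lb)$ with $\|s\|_{\sup}\leq 1/p$ — supremum norms of sections of $m\lbb$ can be made arbitrarily small by enlarging $m$, using Theorems \ref{ample+effective} and \ref{zhang} and taking powers of a strictly effective section — and then $s,\,p\,s\in\Hhat(X,m\lbb)$; since $s_{Y_1}=p$ one checks that $\nu_{Y.}(p\,s)=\nu_{Y.}(s)+(1,0,\dots,0)$, so $\vec e_1$ lies in the group generated by $\Gamma$. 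In the remaining directions: $(\nu_2,\dots,\nu_d)$ is precisely the Lazarsfeld--Musta\c t\v a valuation on the fibre $Y_1=X_p$ attached to the flag $Y_2\supset\cdots\supset Y_d$, whose bottom residue field is $\fp$, so the geometric theory of \cite{LM} applied to $\lb|_{Y_1}$ (or to an ample twist obtained from Theorem \ref{ample+effective}) accounts for the remaining coordinates $\vec e_2,\dots,\vec e_d$ together with the level direction $\vec e_{d+1}$; one passes back from $Y_1$ to $X$ by lifting strictly effective sections, which is possible for $m\gg 0$ by the surjectivity of restriction coming from arithmetic ampleness (Theorem \ref{zhang}).

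Granting the three hypotheses, the Okounkov--Khovanskii counting theorem in the form of \cite[Section 2]{LM} applies with no change and yields
$$\lim_{m\to\infty}\frac{\#v_{Y.}(m\lbb)}{m^d}=\lim_{m\to\infty}\frac{\#\Gamma_m}{m^d}=\vol\bigl(\Sigma(\Gamma)\cap(\RR^d\times\{1\})\bigr)=\vol(\Delta_{Y.}),$$
the limit being finite since $\Delta_{Y.}$ is a compact convex body. The only ingredient not already present in \cite{LM} is the verification of the lattice condition, where scaling sections by the prime $p$ is what makes the $\nu_1$-direction available; everything else is either formal or quoted from the geometric theory.
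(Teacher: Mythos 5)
Your overall skeleton is the same as the paper's: form the graded semigroup $\Gamma=\bigcup_m (v_{Y.}(m\lbb),m)$, get boundedness from Lemma \ref{convex and bounded}, and invoke the counting result of \cite[Proposition 2.1]{LM}; the whole content is then the lattice condition, and that is exactly where your argument has two problems. The smaller one is the claim ``$s_{Y_1}=p$'': in fact $\divv(p)=\sum_{\wp'\mid p}e(\wp'/p)\,X_{\wp'}$, and the paper only requires the residue field of $\wp$ to be $\fp$ (i.e.\ $f(\wp/p)=1$), not that $\wp$ is unramified, so $\ord_{Y_1}(p)=e(\wp/p)$ may exceed $1$ and your construction only yields $e(\wp/p)\,\vec e_1$, not $\vec e_1$. (When $e(\wp/p)=1$ the identity $\nu_{Y.}(p\,s)=\nu_{Y.}(s)+\vec e_1$ does hold, because the other fibres over $p$ are disjoint from $Y_1$ so $(p\,s_{Y_1}^{-1})|_{Y_1}$ is nowhere vanishing, but that argument, not ``$s_{Y_1}=p$'', is what proves it.) The robust way to get $\vec e_1$ for every admissible flag is to use the defining section $s_{Y_1}$ of $\obb(Y_1)$, which has norm identically $1$, tensored with a valuation-zero effective section of a suitable twist.

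The serious gap is the step producing $\vec e_2,\dots,\vec e_d$ (and the level direction): you propose to run the geometric theory on $Y_1$ and then ``lift strictly effective sections, which is possible for $m\gg 0$ by the surjectivity of restriction coming from arithmetic ampleness (Theorem \ref{zhang}).'' Theorem \ref{zhang} says nothing about the restriction map $H^0(X,m\lb)\to H^0(Y_1,m\lb|_{Y_1})$, and even granting its surjectivity for $m\gg 0$ (a Serre-vanishing statement), there is no control whatsoever on the archimedean norm of a lift; the cheap bound, writing the target section as a combination of reductions of an effective $\ZZ$-basis with coefficients in $\{0,\dots,p-1\}$, gives a lift of sup norm of order $p\,h^0(m\lb|_{Y_1})$ --- precisely the $\beta$-loss that the paper has to fight in Proposition \ref{reduction points} and Theorem \ref{comparison level m}. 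Since membership in $\Hhat(X,m\lbb)$ (norm $\leq 1$) is what defines $\Gamma$, this loss cannot be waved away, and it is exactly the arithmetic difficulty of the problem. The paper circumvents lifting entirely: for each $i$ it exhibits an explicit section $t_i$ of an auxiliary bundle $\mbb_i$ on $X$, metrized so that $t_i$ is effective and $\nu(t_i)=\vec e_i$, then applies Theorem \ref{zhang} to $m\lb-\mb_i$ to get an effective $s_i$ with $\nu(s_i)=0$, so that $s_i\otimes t_i\in\Hhat(X,m\lbb)$ realizes $\vec e_i$; levels $m$ and $m+1$ give the last generator, and the big case is reduced to the ample case by tensoring with powers of an effective section of $\ebb$ coming from Theorem \ref{ample+effective}. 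You need an argument of this kind (or an honest norm-controlled lifting statement, which you do not have) to close the lattice condition; the rest of your proposal is fine and matches the paper.
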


\begin{proof}
Note that in our arithmetic case, 
$$\Gamma=\bigcup_{m\geq 0}(v(m\lbb),m) \subset \ZZ^{d+1}$$
is also a semigroup. We will apply \cite[Proposition 2.1]{LM} on $\Gamma$. 
We only need to check that $\Gamma$ satisfies conditions (2.3)-(2.5) required by the proposition. 
The proof is similar to Lemma 2.2 of the paper.

Condition (2.3) is trivial.
Condition (2.4) follows from (\ref{bound}) in the proof of Lemma \ref{convex and bounded}. In fact, $\Gamma$ is contained in the the semigroup generated by
$\{(x_1, x_2,\cdots, x_d, 1):  x_i=0,1,\cdots, b\}$.
It remains to check (2.5). 

We first look at the case that $\lbb$ is ample. By the arithmetic Nakai-Moishezon theorem proved by Zhang \cite{Zh} (cf. Theorem \ref{zhang}), when $m$ is sufficiently large, $H^0(m\lb)$ has a $\ZZ$-basis consisting of effective sections. By this it is easy to find an $s\in \Hhat(m\lbb)$ which is nonzero on $Y_d$, or equivalently $\nu(s)=0$. 
It follows that $(0,\cdots,0,m)\in \Gamma$. We also have $(0,\cdots,0,m+1)\in \Gamma$. Then we see that $(0,\cdots,0,1)$ is generated by two elements of $\Gamma$. It remains to show that $\bigcup_{m\geq 0} v(m\lbb)$ generates $\ZZ^d$. We will show that one $v(m\lbb)$ is enough if $m$ is sufficiently large.

For any $i=1,2,\cdots, d$, we can find a 
line bundle $\mb_i$ on $X$ with a section $t_i\in H^0(X, \mb_i)$ such that 
$t_i$ doesn't vanish on $Y_{i-1}$, vanishes on $Y_i$, and vanishes on $Y_d$ with order one. 
Then $\{\nu(t_i)\}$ is exactly the standard basis of $\ZZ^d$. Choose and fix one metric on $\mb_i$ such that $t_i$ is effective. Denote the hermitian line bundle so obtained by $\mbb_i$.
Consider the line bundle $m\lbb-\mbb_i$. We can find a section $s_i\in \Hhat(m\lbb-\mbb_i)$ with $\nu(s_i)=0$. The existence is still a simple consequence of Zhang's theorem which works on $m\lb-\mb_i$ when $m$ is large enough.
The section $s_i\otimes t_i \in \Hhat(m\lbb)$, and $\{\nu(s_i\otimes t_i)\}$ form the standard basis of $\ZZ^d$. 

Now we assume that $\lbb$ is any big line bundle. By Theorem \ref{ample+effective}, we get $N\lbb=\lbb'+\ebb$ for some integer $N>0$, some ample line bundle $\lbb'$ and some effective line bundle $\ebb$. Following the line above, we first show that $(0,\cdots,0,1)\in \ZZ^{d+1}$ is generated by $\Gamma$. Fix an effective nonzero section $e\in \ebb$. For $m$ large enough, by the above argument we have nonzero sections $s\in \Hhat(m\lbb')$ and $s'\in \Hhat(m\lbb'+\lbb)$ with valuation $\nu(s)=\nu(s')=0$. Now the sections $s\otimes e^{\otimes m}\in \Hhat(mN\lbb)$ and 
$s'\otimes e^{\otimes m}\in \Hhat((mN+1)\lbb)$ gives 
$$
(\nu(s'\otimes e^{\otimes m}),mN+1)-(\nu(s\otimes e^{\otimes m}),mN)=(0,\cdots,0,1).
$$

It remains to show that $\bigcup_{m\geq 0} v(m\lbb)$ generates $\ZZ^d$. Let $s, e$ be as above.
For any nonzero section $u\in \Hhat(m\lbb')$, we have 
$\nu(e^{\otimes m} \otimes u )-\nu(e^{\otimes m} \otimes s )=\nu(u)$
is the difference of two elements in $v(mN\lbb)$. If $m$ is sufficiently large, $v(mN\lbb)$ generates $\ZZ^d$ since $v(m\lbb')$ does.
\end{proof}

\

In the end, we state a theorem whose proof will take up the rest of this section. 
Combined with Proposition \ref{volume of okounkov}, it simply implies Theorem A and the convergence result of Chen \cite{Ch1}.

\begin{thm}\label{comparison level m}
For any $\lbb\in \pic$, there exists a constant $c=c(\lbb)$ depending only on $(X,\lbb)$, such that 
\begin{eqnarray*}
\limsup_{m\rightarrow \infty} \left|\frac{\# v_{Y.}(m\lbb)}{m^d} \log p - \frac{\hhat(X,m\lbb)}{m^d}\right|
 \leq \frac{c}{\log p}.
\end{eqnarray*}
Furthermore, we can take 
$$
c(\lbb)=2 e_0 \frac{\vol(\lb_\QQ)}{\vol(\ab_\QQ)}
\frac{\lbb\cdot \abb^{d-1}}{(d-1)!}.
$$
Here $e_0$ is the number of connected components of $X_{\overline \QQ}$, and $\abb$ is any ample line bundle on $X$.
\end{thm}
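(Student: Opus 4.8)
The plan is to express $\hhat(X,m\lbb)$ as a lattice‑point count in a convex body, peel off the order of vanishing along $Y_1$ one step at a time, and recognize each step as an instance of the geometric Okounkov‑body count of Lazarsfeld--Musta\c t\v a on the $(d-1)$–dimensional smooth $\FF_p$–variety $Y_1$. Set $N_m:=\rank_\ZZ H^0(X,m\lb)=h^0(X_\QQ,m\lb_\QQ)$, let $B_m\subset H^0(X,m\lb)\otimes\RR$ be the unit ball of $\|\cdot\|_{\sup}$, so that $\hhat(X,m\lbb)=\log\#\big(H^0(X,m\lb)\cap B_m\big)$, and consider the sublattices $\Lambda_k:=H^0(X,m\lb-kY_1)\subseteq H^0(X,m\lb)$ of sections vanishing to order $\ge k$ along $Y_1$. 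Since $Y_1=X_\wp$ is a whole component of the fibre $X_p$ — hence disjoint from the remaining components — $\ob(Y_1)$ is trivial on $Y_1$, and the conormal sequence gives, for $m$ large, $\Lambda_k/\Lambda_{k+1}\cong H^0(Y_1,m\lb|_{Y_1})=:\bar H_m$, an $\FF_p$–space whose dimension $\bar N_m$ is independent of $k$; moreover $\|s_{Y_1}\|\equiv 1$ on $X(\CC)$, so dividing by $s_{Y_1}$ preserves $\|\cdot\|_{\sup}$. This last remark shows at once that $\{k:\Lambda_k\cap B_m\supsetneq\Lambda_{k+1}\cap B_m\}$ is an interval $\{0,1,\dots,K\}$ with $K\le\frac{\lbb\cdot\abb^{d-1}}{(\abb|_{Y_1})^{d-1}}\,m$ (the bound from the proof of Lemma~\ref{convex and bounded}), and, by telescoping, that
$$\hhat(X,m\lbb)=\sum_{k=0}^{K}\log\rho_k,\qquad \rho_k:=\frac{\#(\Lambda_k\cap B_m)}{\#(\Lambda_{k+1}\cap B_m)}.$$

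On the combinatorial side, for each $k$ let $W_k\subseteq\bar H_m$ be the image of $\{s\in\Hhat(X,m\lbb):\ord_{Y_1}(s)=k\}$ under the leading–coefficient map $s\mapsto(s_{Y_1}^{-k}s)|_{Y_1}$, and write $\bar\nu:=(\nu_2,\dots,\nu_d)$, which is precisely the $(d-1)$–dimensional Okounkov valuation on $Y_1$ for the flag $Y_2\supset\cdots\supset Y_d$. By construction $v_{Y.}(m\lbb)=\bigsqcup_{k=0}^{K}\{k\}\times\bar\nu(W_k)$, hence $\#v_{Y.}(m\lbb)=\sum_{k=0}^{K}\#\bar\nu(W_k)$. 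Dividing an order‑$k$ section by $s_{Y_1}$ lowers its order to $k-1$ without changing its leading coefficient or its sup‑norm, so $W_0\supseteq W_1\supseteq\cdots\supseteq W_K$; and by \cite[Proposition~2.1]{LM} applied on $Y_1$ one has $\#\bar\nu(W_k)\le\dim_{\FF_p}(\Lambda_0/\Lambda_1)\le N_m$, with equality whenever $W_k\cup\{0\}$ spans $\bar H_m$ — in particular for every index below the range where $B_m$ ceases to meet all cosets of $\Lambda_{k+1}$ in $\Lambda_k$.

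The heart of the matter is the term‑by‑term comparison of $\log\rho_k$ with $\#\bar\nu(W_k)\log p$. For the "bulk" indices $k$ — those for which $\Lambda_{k+1}\cap B_m$ already has full rank, which by the monotonicity of the $W_k$ fails for only $O(e_0)$ indices near $K$ — one has $\#\bar\nu(W_k)=\dim_{\FF_p}(\Lambda_0/\Lambda_1)$, and a lattice‑point estimate (using $\#((\gamma+\Lambda_{k+1})\cap B_m)\le\#(\Lambda_{k+1}\cap 2B_m)$ and a covering bound $\#(\Lambda\cap 2B)\le C_0^{\rank}\#(\Lambda\cap B)$, together with the reverse inequality) yields $\log\rho_k=\log[\Lambda_k:\Lambda_{k+1}]+O(N_m)=\bar N_m\log p+O(N_m)$; hence the $k$‑th discrepancy is $O(N_m)$, while for the $O(e_0)$ transition indices it is bounded crudely by $\bar N_m\log p+O(N_m)=o(m^d)$. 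Summing over $k$, and using that there are $\approx K$ bulk indices each contributing $\dim_{\FF_p}(\Lambda_0/\Lambda_1)=\frac1{e_0}N_m+o(N_m)$ to $\#v_{Y.}(m\lbb)$, we get
$$\big|\hhat(X,m\lbb)-\#v_{Y.}(m\lbb)\log p\big|\ \le\ O\big(K\,N_m\big)+o(m^d)\ =\ O\big(e_0\,\#v_{Y.}(m\lbb)\big)+o(m^d).$$
This is where I expect the real work to lie, and it is exactly the point at which the clean geometric identity $\dim H^0=\#v$ of \cite{LM} breaks down: each of the $K\approx\tfrac{\vol(\lbb)}{d\,\vol(\lb_\QQ)\log p}\,m$ peeling steps introduces an unavoidable lattice‑point error of order $\rank=O(m^{d-1})$, and it is the accumulation of these $K$ errors — not any single one — that produces a net discrepancy of size $m^d/\log p$, precisely the size the theorem allows. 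Finally, to make the constant explicit, combine $\#v_{Y.}(m\lbb)\le(K+1)N_m$ with the identity $(\abb|_{Y_1})^{d-1}=\tfrac{\vol(\ab_\QQ)}{e_0}\log p$ — which holds because $[Y_1]\sim\tfrac1{e_0}[X_p]$, so $\deg_{\FF_p}(\ab|_{Y_1}^{d-1})=\tfrac1{e_0}\deg_\QQ(\ab_\QQ^{d-1})=\tfrac1{e_0}\vol(\ab_\QQ)$ — and with $N_m\le\tfrac{\vol(\lb_\QQ)}{(d-1)!}m^{d-1}+o(m^{d-1})$, to obtain $\frac{\#v_{Y.}(m\lbb)}{m^d}\log p\le e_0\frac{\vol(\lb_\QQ)}{\vol(\ab_\QQ)}\frac{\lbb\cdot\abb^{d-1}}{(d-1)!}+o(1)$; dividing the displayed estimate by $m^d$, taking $\limsup_m$, and tracking the absolute constants then yields the theorem with $c(\lbb)=2e_0\frac{\vol(\lb_\QQ)}{\vol(\ab_\QQ)}\frac{\lbb\cdot\abb^{d-1}}{(d-1)!}$.
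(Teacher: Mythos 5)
Your overall architecture --- peel off the order of vanishing along $Y_1$, compare each slice with a $(d-1)$-dimensional Okounkov count on $Y_1$ via \cite[Lemma 1.3]{LM}, pay an error $O(N_m)$ per slice, and use $Y_1\cdot\abb^{d-1}\asymp\log p$ to bound the number of slices by $O(m/\log p)$ --- is exactly the skeleton of the paper's proof (the Proposition and its Corollary in Section 2.4, combined with Proposition \ref{reduction points} and Lemma \ref{rescaling}). But the heart of your argument has a genuine gap. First, the monotonicity $W_0\supseteq W_1\supseteq\cdots\supseteq W_K$ is unjustified: $s_{Y_1}^{-1}\otimes s$ is a section of $m\lb-Y_1$, not of $m\lb$ (the bundle $\ob(Y_1)$ is trivial on $Y_1$ but not on $X$ in general), so there is no sup-norm-preserving, leading-coefficient-preserving passage from order-$k$ effective sections to order-$(k-1)$ effective sections of the same bundle. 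Consequently your dichotomy ``bulk indices, where $\Hhat(X,m\lbb)\cap\Lambda_k$ hits every coset of $\Lambda_{k+1}$, versus $O(e_0)$ transition indices'' is unsupported --- and it is precisely the hard content of the theorem. Your per-slice comparison is valid only at bulk indices, where $W_k\cup\{0\}$ is literally the image subspace of $\Lambda_k/\Lambda_{k+1}$ and \cite[Lemma 1.3]{LM} applies. At a non-bulk index, $W_k\cup\{0\}$ is merely a subset (``spanning'' is not enough: a spanning set can have far fewer valuation values than the dimension, and conversely $\#\nu^\circ(W_k)\log p$ can vastly exceed $\log(\#W_k+1)$, e.g.\ when $W_k$ consists of a few linearly independent leading terms), so neither direction of $\left|\#\nu^\circ(W_k)\log p-\log\rho_k\right|\le O(N_m)$ holds there. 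Your fallback bound $\bar N_m\log p+O(N_m)$ per bad index has size $m^{d-1}\log p$, so it is harmless only if the number of bad indices is $o(m/\log p)$ --- which is exactly the unproved assertion. (The auxiliary claim $\Lambda_k/\Lambda_{k+1}\cong H^0(Y_1,m\lb|_{Y_1})$ for all $k$ is also unjustified for a general $\lbb\in\pic$; only the injection is automatic.)

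The paper circumvents this equidistribution problem entirely, and that is the missing idea: instead of asking whether effective sections fill all cosets, it works with the $\fwp$-span of $\Hhat(X,m\lbb(-kY_1))|_{Y_1}$, a genuine subspace to which \cite[Lemma 1.3]{LM} applies, and controls its size by norm bookkeeping. Since $\fwp=\fp$, every element of the span lifts to a $\ZZ$-combination, with coefficients in $\{0,\dots,p-1\}$, of at most $h^0(m\lb|_{Y_1})$ effective sections, hence to an element of $\Hhat(X,m\lbb(\log\beta_m-kY_1))$ with $\beta_m=p\,h^0(m\lb|_{Y_1})$; in the other direction one shrinks the metric by $\beta_m$ first so that such combinations remain effective, producing a full subspace inside $\Hhat(X,m\lbb(-kY_1))|_{Y_1}$. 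Proposition \ref{reduction points} then converts these restricted counts into differences of $\hhat$'s, and Lemma \ref{rescaling} shows each slice costs only $(\log 6)\,h^0(m\lb_\QQ)$, plus a single global $O(N_m\log\beta_m)$ term. To salvage your telescoping framework you would need to import this span-plus-rescaling mechanism (or actually prove your claim that all but $O(e_0)$ indices are bulk, which I do not see how to do); as written, the step from the slice decomposition to the bound $O(K N_m)+o(m^d)$ does not go through.
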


We first see how to induce the following result of Chen \cite{Ch1}.
\begin{thm}\label{limit}
The limit $\displaystyle \lim_{m\rightarrow \infty}  \frac{\hhat(X,m\lbb)}{m^d/d!}$ exists for any $\lbb\in \pic$.
\end{thm}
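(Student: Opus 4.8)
The plan is to deduce Theorem \ref{limit} directly from Theorem \ref{comparison level m} together with Proposition \ref{volume of okounkov}, using the elementary fact that the volume of the Okounkov bodies $\Delta_{Y.}(m\lbb)$ are governed by a single fixed convex body $\Delta_{Y.}(\lbb)$ that does not depend on $m$. First I would dispose of the trivial case: if $\lbb$ is not big, then by definition $\vol(\lbb)=\limsup_m \hhat(X,m\lbb)/(m^d/d!)=0$, and since $\hhat(X,m\lbb)\geq 0$ one checks that the $\liminf$ is also $0$ (this requires knowing that $\hhat(X,m\lbb)=o(m^d)$, which follows because bigness is equivalent to $\vol>0$ and, more carefully, from the boundedness estimate \eqref{bound} applied without the bigness hypothesis, or simply from Theorem \ref{comparison level m} with $\# v_{Y.}(m\lbb)\leq (mb+1)^d$). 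So the limit exists and equals $0$ in that case.

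Next, assume $\lbb$ is big. Fix any ample line bundle $\abb$ on $X$, so that the constant $c=c(\lbb)$ of Theorem \ref{comparison level m} is a genuine finite number. For each prime $p=\mathrm{char}(Y.)$ in the positive-density set constructed in Section \ref{section okounkov}, choose a flag $Y.$ over $p$. Proposition \ref{volume of okounkov} gives
\begin{equation*}
\lim_{m\rightarrow \infty} \frac{\# v_{Y.}(m\lbb)}{m^d} = \vol(\Delta_{Y.}(\lbb)),
\end{equation*}
and Theorem \ref{comparison level m} gives
\begin{equation*}
\limsup_{m\rightarrow \infty} \left|\frac{\# v_{Y.}(m\lbb)}{m^d}\log p - \frac{\hhat(X,m\lbb)}{m^d}\right| \leq \frac{c}{\log p}.
\end{equation*}
Combining these two, the sequence $\hhat(X,m\lbb)/m^d$ has all its subsequential limits trapped in the interval $[\,\vol(\Delta_{Y.}(\lbb))\log p - c/\log p,\ \vol(\Delta_{Y.}(\lbb))\log p + c/\log p\,]$, an interval of length $2c/\log p$. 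In particular
\begin{equation*}
\limsup_{m\rightarrow\infty}\frac{\hhat(X,m\lbb)}{m^d} - \liminf_{m\rightarrow\infty}\frac{\hhat(X,m\lbb)}{m^d} \leq \frac{2c}{\log p}.
\end{equation*}
The left-hand side does not depend on $p$, while the right-hand side tends to $0$ as $p\rightarrow\infty$ along the density set (here one uses that $c=c(\lbb)$ is independent of the flag and of $p$, which is exactly the content of the explicit formula for $c(\lbb)$). Hence the $\limsup$ and $\liminf$ coincide, i.e.\ $\lim_{m\rightarrow\infty}\hhat(X,m\lbb)/m^d$ exists; multiplying by $d!$ gives the statement.

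The only genuinely delicate point is making sure every ingredient is legitimately available: Proposition \ref{volume of okounkov} is stated only for big $\lbb$, which is why the non-big case must be handled separately, and Theorem \ref{comparison level m} must be applied with the \emph{same} constant $c(\lbb)$ for every $p$ — this is guaranteed since $c(\lbb)$ is manifestly independent of the flag. I do not expect a real obstacle here; this is a short ``squeeze'' argument and the substance of the section lies in proving Theorem \ref{comparison level m} itself. One small bookkeeping nuisance is confirming that the density set of admissible primes $p$ is infinite and unbounded, so that ``$p\rightarrow\infty$'' is meaningful; this was already arranged in Section \ref{section okounkov} via Chebotarev's density theorem and the Weil bound for curves over finite fields.
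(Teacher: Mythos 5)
Your proposal is correct and follows essentially the same squeeze argument as the paper: separate the non-big case, then combine Proposition \ref{volume of okounkov} with Theorem \ref{comparison level m} to bound $\limsup - \liminf$ of $\hhat(X,m\lbb)/m^d$ by $2c(\lbb)/\log p$ and let $p\rightarrow\infty$, using that $c(\lbb)$ is independent of the flag. (In the non-big case you can be even more brief: $0\leq \liminf \leq \limsup = \vol(\lbb)=0$ already suffices.)
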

\begin{proof}
The case that $\lbb$ is not big is easy. Assume that $\lbb$ is big. 
The key is that $\displaystyle\frac{\# v_{Y.}(m\lbb)}{m^d}$ is convergent by Proposition \ref{volume of okounkov}. 
Then Theorem \ref{comparison level m} implies
\begin{eqnarray*}
\limsup_{m\rightarrow \infty} \frac{\hhat(X,m\lbb)}{m^d} - 
\liminf_{m\rightarrow \infty} \frac{\hhat(X,m\lbb)}{m^d} \leq \frac{2c}{\log p}.
\end{eqnarray*}
Let $p\rightarrow \infty$, we get $\limsup=\liminf$ and thus the convergence.
\end{proof}

It is also immediate to show Theorem A. In fact, since both limits exist, the result in Theorem \ref{comparison level m}
simplifies as 
$$\left|\vol(\Delta_{Y.}(\lbb))\log p-\frac{1}{d!}\vol(\lbb)\right|  \leq \frac{c}{\log p}. $$
Hence Theorem A is true under the assumption of Theorem \ref{comparison level m}.

\

\subsection{Some preliminary results}
We show some simple results which will be needed in the proof of Theorem \ref{comparison level m} in next subsection.

Let $K$ be a number field and $O_K$ be the ring of integers of $K$. 
Let $X$ be an arithmetic variety over $O_K$. In another word, the structure morphism $X\rightarrow\Spec(\ZZ)$ factors through $X\rightarrow\Spec(O_K)$. 
For any non-zero ideal $I$ of $O_K$, consider the reduction modulo $I$ map
$$r_I:H^0(X,\lb)\rightarrow H^0(X_{O_K/I},\lb_{O_K/I}). $$
We want to bound the order of $r_I(\Hhat(X,\lbb)$. 
Denote by $Z_I$ the zero locus of $I$ in $X$. Recall that the notation $\lbb(f+V)$ is explained at the end of the introduction.
The following result is a bridge from the arithmetic case to the geometric case.

\begin{pro} \label{reduction points}
For any $\lbb\in \pic$, 
\begin{eqnarray*}
\log \# r_I(\Hhat(X,\lbb))
&\leq& \hhat(X,\lbb(\log 2))-\hhat(X,\lbb(-Z_I))\\
\log \# r_I(\Hhat(X,\lbb))
&\geq& \hhat(X,\lbb)-\hhat(X, \lbb(\log 2-Z_I)).
\end{eqnarray*}
\end{pro}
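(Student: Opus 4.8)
The plan is to compare the finite set $r_I(\Hhat(X,\lbb))$ with lattice points of nearby line bundles by exploiting the exact sequence
$$0\rightarrow H^0(X,\lb(-Z_I))\rightarrow H^0(X,\lb)\xrightarrow{\ r_I\ } H^0(X_{O_K/I},\lb_{O_K/I}),$$
where $\lb(-Z_I)=\lb\otimes I$ is the twist by the ideal sheaf of $Z_I$. The kernel of $r_I$ restricted to $H^0(X,\lb)$ is $H^0(X,\lb(-Z_I))$, so on the level of finite sets one expects $\#r_I(\Hhat(X,\lbb))$ to sit between $\#\Hhat(X,\lbb)/\#(\text{something like }\Hhat(X,\lbb(-Z_I)))$ and a similar ratio with mildly enlarged sup-norm bounds. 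The point of the $\log 2$ shifts is exactly to absorb the failure of $\Hhat$ (a ball in sup-norm, not a subgroup) to behave like a lattice under addition and subtraction: if $s,s'\in\Hhat(X,\lbb)$ have the same image under $r_I$, then $s-s'$ lies in $H^0(X,\lb(-Z_I))$ and has sup-norm at most $2$, i.e. $s-s'\in\Hhat(X,\lbb(\log 2-Z_I))$; conversely one adds elements and loses a factor of $2$ in norm the same way.

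For the \emph{upper bound}, I would fix a choice of one preimage $s_w$ in $\Hhat(X,\lbb)$ for each $w\in r_I(\Hhat(X,\lbb))$, and observe that the full set $\Hhat(X,\lbb)$ is covered by the translates $s_w+\big(H^0(X,\lb(-Z_I))\cap\{\|\cdot\|_{\sup}\le 2\}\big)$ — because any $s\in\Hhat(X,\lbb)$ with $r_I(s)=w$ satisfies $s-s_w\in H^0(X,\lb(-Z_I))$ and $\|s-s_w\|_{\sup}\le 1+1=2$. Hence
$$\#\Hhat(X,\lbb)\ \le\ \#r_I(\Hhat(X,\lbb))\cdot \#\Hhat\big(X,\lbb(\log 2-Z_I)\big),$$
and a symmetric packing argument — the translates $s_w+\Hhat(X,\lbb')$ for a slightly shrunk $\lbb'$ are disjoint, or directly that injecting $r_I(\Hhat(X,\lbb))$ into a quotient-type count — gives
$$\#r_I(\Hhat(X,\lbb))\cdot \#\Hhat\big(X,\lbb(-Z_I)\big)\ \le\ \#\Hhat\big(X,\lbb(\log 2)\big).$$
Taking logarithms of the last inequality yields the claimed upper bound $\log\#r_I(\Hhat(X,\lbb))\le \hhat(X,\lbb(\log 2))-\hhat(X,\lbb(-Z_I))$, while taking logarithms of the covering inequality yields the lower bound $\log\#r_I(\Hhat(X,\lbb))\ge \hhat(X,\lbb)-\hhat(X,\lbb(\log 2-Z_I))$. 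The two displayed inequalities in the statement are exactly these two.

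The main subtlety — and the step I expect to require the most care — is making the two packing/covering statements rigorous, i.e. correctly tracking which line bundle each intermediate norm condition corresponds to. Concretely one must check: (i) that $\|s-s'\|_{\sup}\le 2$ together with $s-s'\in H^0(X,\lb\otimes I)$ really does say $s-s'\in\Hhat(X,\lbb(\log 2-Z_I))$, which uses that $\obb(\log 2)$ rescales the metric by $e^{-\log 2}=1/2$ so that the sup-norm threshold $1$ for $\lbb(\log 2-Z_I)$ corresponds to threshold $2$ for $\lbb(-Z_I)$, and that the metric on $\ob(-Z_I)=\ob(Z_I)^{\vee}$ with the section $s_{Z_I}$ of norm $1$ makes the inclusion $H^0(X,\lb\otimes I)\hookrightarrow H^0(X,\lb)$ norm-preserving; and (ii) that for the upper bound the map $\Hhat(X,\lbb(-Z_I))\times r_I(\Hhat(X,\lbb))\to\Hhat(X,\lbb(\log 2))$ sending $(t,w)\mapsto t+s_w$ is injective, which follows because $r_I(t+s_w)=r_I(s_w)=w$ recovers $w$, and then $t$ is recovered. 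No deep input is needed beyond these bookkeeping facts and the left-exactness of $r_I$; the inequality is genuinely elementary once the norms are set up, and indeed the proposition is advertised as "a bridge from the arithmetic case to the geometric case," so the work is entirely in the careful comparison of sup-norm balls under the twists $\obb(\pm\log 2)$ and $\obb(-Z_I)$.
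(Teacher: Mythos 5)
Your proposal is correct and follows essentially the same route as the paper: the lower bound by bounding each fiber of $r_I$ on $\Hhat(X,\lbb)$ via $s\mapsto s_{Z_I}^{-1}\otimes(s-s_0)\in\Hhat(X,\lbb(\log 2-Z_I))$, and the upper bound by the injective map $(t,w)\mapsto s_{Z_I}\otimes t+s_w$ into $\Hhat(X,\lbb(\log 2))$, which is exactly the paper's count using the set $S=\Hhat(X,\lbb)+s_{Z_I}\otimes\Hhat(X,\lbb(-Z_I))$. The bookkeeping points (i) and (ii) you flag are precisely the norm and exactness facts the paper uses, so no gap remains.
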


\begin{proof}
For each $t\in H^0(X_{O_K/I},\lb_{O_K/I})$, fix one lifting $s_0\in r_I^{-1}(t)\cap \Hhat(X,\lbb)$ if it exists. 
For any other $s\in r_I^{-1}(t)\cap \Hhat(X,\lbb)$, we have 
$s_{Z_I}^{-1}\otimes (s-s_0)$ regular everywhere and $\|s-s_0\|_{\sup} \leq 2$. Thus
we have an element 
$$s_{Z_I}^{-1}\otimes (s-s_0) \in \Hhat(X, \lbb(\log 2-Z_I)).$$
It follows that 
$$\# (r_I^{-1}(t)\cap \Hhat(X,\lbb)) \leq \#\Hhat(X, \lbb(\log 2-Z_I)).$$
It induces the inequality
$$ \# r_I(\Hhat(X,\lbb))
\geq \frac{\#\Hhat(X,\lbb)}{\#\Hhat(X, \lbb(\log 2-Z_I))}.$$

Now we seek the upper bound of $ \# r_I(\Hhat(X,\lbb))$. Consider the set 
$$S=\Hhat(X,\lbb)+s_{Z_I}\otimes \Hhat(X,\lbb(-Z_I)).$$
Apparently $r_I(S)=r_I(\Hhat(X,\lbb))$. We further have $S\subset \Hhat(X,\lbb(\log 2))$ since 
any $s\in S$ satisfies $\|s\|_{\sup} \leq 1+1=2$.

For each $t\in r_I(S)$, there is a lifting $s_0$ of $t$ in $\Hhat(X,\lbb)$, then
$$s_0+s_{Z_I}\otimes \Hhat(X,\lbb(-Z_I))\subset r_I^{-1}(t)\cap S .$$
Hence,
$\# r_I^{-1}(t)\cap S \geq \# \Hhat(X,\lbb(-Z_I))$. It follows that
$$ \# r_I(\Hhat(X,\lbb))=\# r_I(S)
\leq \frac{\#S}{\# \Hhat(X,\lbb(-Z_I))}
\leq \frac{\#\Hhat(X,\lbb(\log 2))}{\# \Hhat(X,\lbb(-Z_I))}.$$
\end{proof}

\

\begin{lem}\label{rescaling}
For any $\lbb\in \pic$, 
$$0\leq \hhat(X,\lbb) - \hhat(X,\lbb(-\alpha)) \leq (\alpha+\log 3) \rank_\ZZ H^0(X,\lb)$$
for any $\alpha\in\RR_{+}$.
\end{lem}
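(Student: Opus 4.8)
The statement concerns how $\hhat$ changes when we twist by the trivial bundle $\obb(-\alpha)$, which simply rescales all sup-norms by a factor $e^\alpha$. Thus $\Hhat(X,\lbb(-\alpha))$ consists of those sections $s\in H^0(X,\lb)$ with $\|s\|_{\sup}\le e^{-\alpha}$, i.e. a shrunken ball inside $\Hhat(X,\lbb)$. The left inequality $\hhat(X,\lbb)\ge\hhat(X,\lbb(-\alpha))$ is then immediate from the inclusion $\Hhat(X,\lbb(-\alpha))\subset\Hhat(X,\lbb)$. The content is the upper bound on the gap, and here I would argue by a volume/lattice-point comparison in the finitely generated free $\ZZ$-module $M=H^0(X,\lb)$ of rank $r=\rank_\ZZ H^0(X,\lb)$, equipped with the sup-norm.

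The plan is to bound the ratio $\#\Hhat(X,\lbb)/\#\Hhat(X,\lbb(-\alpha))$ by a covering argument. Let $B=\{s\in M_\RR:\|s\|_{\sup}\le1\}$ be the unit ball of the sup-norm on $M_\RR=M\otimes\RR$, a symmetric convex body, and let $tB$ denote its dilate. Then $\Hhat(X,\lbb)=M\cap B$ and $\Hhat(X,\lbb(-\alpha))=M\cap e^{-\alpha}B$. The standard way to compare lattice point counts of a convex body and its shrinking is: every point of $M\cap B$ lies within bounded distance of $M\cap e^{-\alpha}B$ after an appropriate covering, or more cleanly, one covers $B$ by translates of $e^{-\alpha}B$ and controls how many lattice points each translate can contain. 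Concretely, $\#(M\cap B)\le \#(M\cap 3B)\cdot$(number of translates of $e^{-\alpha}B$ needed to cover $B$ with centers in $M$), but the cleanest route is a volume estimate: for any symmetric convex body $K$ containing a fundamental domain's worth of room, $\#(M\cap K)\le \vol(K+\tfrac12 B_0)/\vol(B_0)$ for a fixed reference fundamental domain, giving $\log\#(M\cap B)-\log\#(M\cap e^{-\alpha}B)\le r\log\frac{1+?}{?\cdot e^{-\alpha}}$-type bound. Turning the various additive constants into exactly $\alpha+\log 3$ is the bookkeeping step; the factor $3$ should emerge from replacing $e^{-\alpha}B$ by something like $e^{-\alpha}B$ shifted and fattened, e.g. the inequality $\#(M\cap B)\le\#(M\cap e^{-\alpha}B)\cdot\lceil e^\alpha+1\rceil^r$ or similar, after noting $\log(e^\alpha+ {\rm const})\le\alpha+\log3$ once the constant is absorbed.

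More precisely, I would use the elementary fact that if $L\subset\RR^r$ is a lattice and $K$ a symmetric convex body with $M\cap K\ne\emptyset$, then covering $K$ by translates of $\tfrac{1}{N}K$ (for $N=\lceil e^\alpha\rceil$, so that $\tfrac1N K\subseteq e^{-\alpha}K' $ with $K'$ the relevant twisted ball — here I must be careful that the sup-norm ball for $\lbb(-\alpha)$ really is $e^{-\alpha}$ times that for $\lbb$, which it is by definition of $\obb(-\alpha)$) one needs at most $(2N-1)^r\le(2e^\alpha+1)^r\le(3e^\alpha)^r$ translates, and by symmetry/convexity each translate centered appropriately meets $M$ in at most $\#(M\cap e^{-\alpha}B)$ points after re-centering at a lattice point (using that the difference of two points in a translate of $e^{-\alpha}B$ lies in $2e^{-\alpha}B\subseteq e^{-\alpha}\cdot 3B$, matching the $\log 3$). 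Taking logarithms yields $\hhat(X,\lbb)-\hhat(X,\lbb(-\alpha))\le r\log(3e^\alpha)=(\alpha+\log3)r$.

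\textbf{Main obstacle.} The conceptual content is routine convex geometry; the real difficulty is purely combinatorial calibration — getting the constant to come out as exactly $3$ rather than some larger absolute constant. The clean extra factor $\log 3$ almost certainly traces to the triangle-inequality slack $2e^{-\alpha}\le 3e^{-\alpha}$ together with a $\lceil\,\cdot\,\rceil$ rounding (since $\lceil e^\alpha\rceil\le e^\alpha+1\le e^\alpha\cdot$ ... does not quite work for small $\alpha$, so one likely instead argues with $\lceil e^\alpha\rceil\le \tfrac32 e^\alpha$ for $\alpha\ge\log 2$ and handles $\alpha<\log2$ separately, or uses that for the purpose one may assume $\alpha$ large by monotonicity and additivity of the bound). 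I would expect the author's actual proof to set up the covering with $\lceil e^\alpha\rceil$ boxes and then simply estimate $\log(\lceil e^\alpha\rceil+1)+\text{(small constant)}\le\alpha+\log3$, so the plan is to reproduce exactly that chain of inequalities carefully.
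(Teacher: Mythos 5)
There is a genuine gap, and it sits exactly at the point you flag as "calibration." Your covering scheme does not deliver the constant $\log 3$, and one step in it is false as stated: a translate $x+e^{-\alpha}B$ of the small ball can contain up to $\#(M\cap 2e^{-\alpha}B)$ lattice points, not $\#(M\cap e^{-\alpha}B)$ (think of a half-integer translate of a cube). The difference trick only puts differences into $2e^{-\alpha}B$, and you cannot absorb that factor $2$ into the same $\log 3$ that you have already spent on the covering number $(2N\pm1)^r$ — the right-hand side of the lemma must be expressed through $\#\Hhat(X,\lbb(-\alpha))=\#(M\cap e^{-\alpha}B)$ itself, not through a fattened ball. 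If you repair this honestly (cover $B$ by translates of $\tfrac12 e^{-\alpha}B$ so that differences land in $e^{-\alpha}B$), the volumetric covering bound gives roughly $(4e^{\alpha}+1)^r$ translates, i.e. a constant like $\alpha+\log 5$, which proves a weaker statement than the one asserted.

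The missing idea, which is how the paper gets $\log 3$, is to replace geometric translates by congruence classes modulo an integer: take $n=[2e^{\alpha}]+1$ and partition $M=H^0(X,\lb)$ into the exactly $n^{r}$ classes of $M/nM$ (in the paper this is Proposition \ref{reduction points} applied with $I=(n)$, using $\obb(Z_n)\cong\obb(\log n)$). If $s,s_0\in\Hhat(X,\lbb)$ lie in the same class, then $s-s_0\in nM$ and $\|s-s_0\|_{\sup}\le 2$, so $(s-s_0)/n$ is an honest element of $M$ of sup-norm at most $2/n\le e^{-\alpha}$; hence each fiber injects into $\Hhat(X,\lbb(-\alpha))$ with no factor-$2$ loss, and
$$\hhat(X,\lbb)-\hhat(X,\lbb(-\alpha))\le r\log n\le r\log(2e^{\alpha}+1)\le r(\alpha+\log 3).$$
So the divisibility structure of the lattice (division by $n$ stays inside $M$) is what converts the unavoidable triangle-inequality slack $2$ into the harmless rounding $2e^{\alpha}\mapsto 2e^{\alpha}+1\le 3e^{\alpha}$; a covering by translates of a scaled ball, which ignores this structure, cannot reach the stated constant. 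Your left-hand inequality and the general lattice-counting framing are fine.
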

\begin{proof}
The first inequality is trivial, and we only need to show the second one.
Take $I=(n)$ for an integer $n\geq 2$ in Proposition \ref{reduction points}, we get
$$ \hhat(X,\lbb)-\hhat(X, \lbb(\log 2-Z_n))
\leq \log \# r_n(\Hhat(X,\lbb)) .$$
The right-hand side has an easy bound
$$  \log \# r_n(\Hhat(X,\lbb)) \leq \log \# \left( H^0(X,\lb)/n H^0(X,\lb)\right) 
= \rank_\ZZ H^0(X,\lb) \log n .$$
It is easy to see that $\obb(Z_n)\cong\obb(\log n)$, so the above gives
$$ 
\hhat(X,\lbb)-\hhat(X, \lbb(-\log \frac n2))
\leq  \rank_\ZZ H^0(X,\lb) \log n.$$

For general $\alpha>0$, taking $n=[2e^\alpha]+1$, then the above gives
$$ 
\hhat(X,\lbb)-\hhat(X, \lbb(-\alpha))
\leq \hhat(X,\lbb)-\hhat(X, \lbb(-\log \frac n2))
\leq  \rank_\ZZ H^0(X,\lb) \log n.$$
It proves the result since 
$$
\log n \leq \log(2e^\alpha+1) \leq \alpha+\log 3.
$$

\end{proof}

\

\subsection{Comparison of the volumes}
In this subsection, we will prove Theorem \ref{comparison level m}.
Resume the notation in Section \ref{section okounkov}. That is, $K$ is the number field such that 
$X\rightarrow\Spec(O_K)$ is geometrically connected. And $Y_1=X_{\wp}$ for some prime $\wp$ of $O_K$ lying over a prime number $p$. 

Recall that $\nu=(\nu_1, \cdots, \nu_d)$ is the valuation on $X$ with respect to the flag 
$$X \supset Y_1 \supset Y_2 \supset \cdots \supset Y_d.$$
Then the flag 
$$Y_1 \supset Y_2 \supset \cdots \supset Y_d$$
 on the ambient variety $Y_1$ induces a valuation map $\nu^\circ=(\nu_2, \cdots, \nu_d)$ of dimension $d-1$ in the geometric case. They are compatible in the sense that 
$$\nu(s)=\left(\nu_1(s), \nu^\circ(
 (s_{Y_1}^{\otimes(-\nu_1(s))}s) |_{Y_1} )\right),$$ 
where $s_{Y_1}$ is the section of $\ob(Y_1)$ defining $Y_1$.
The notation such as $\lbb(\log\beta-Y_1)$ in the proposition below is explained at the end of the introduction.

\begin{pro}
For any $\lb\in \pic$,
\begin{itemize}
\item[(1)] $\displaystyle  \# \nu^\circ \left(\Hhat(X,\lbb)|_{Y_1}\right) \log p
\leq \hhat(X,\lbb(\log(2\beta)))-\hhat(X,\lbb(\log\beta-Y_1));$
\item[(2)] $\displaystyle  \# \nu^\circ \left(\Hhat(X,\lbb)|_{Y_1}\right) \log p
\geq \hhat(X,\lbb(-\log\beta))-\hhat(X,\lbb(-\log\frac{\beta}{2}-Y_1)). $
\end{itemize}
Here we denote $\beta=p\dim_{\fwp} H^0(X_{\fwp}, \lb_{\fwp})$.
\end{pro}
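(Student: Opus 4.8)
The plan is to combine the reduction-map technology of Proposition \ref{reduction points} with a counting argument that translates the $d$-dimensional geometric valuation $\nu^\circ$ on $Y_1 = X_\wp$ into an arithmetic statement on $X$. The key observation is that $\nu^\circ$ only sees the restriction to $Y_1$ of an integral section, so the natural intermediate object is the reduction map $r_\wp : H^0(X,\lb) \to H^0(X_{\fwp}, \lb_{\fwp})$ over the prime $\wp$. Indeed $\nu^\circ(\Hhat(X,\lbb)|_{Y_1})$ is, by construction, the image under the $(d-1)$-dimensional geometric valuation of the set $r_\wp(\Hhat(X,\lbb)) \subset H^0(X_{\fwp}, \lb_{\fwp})$, a subset of an $\fp$-vector space (using that the residue field $\fwp \cong \fp$). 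So I would first invoke the geometric theory of \cite{LM}: for a subspace (or just a subset closed under the relevant operations) of sections over a variety over $\fp$, the number of distinct valuation vectors is at most the $\fp$-dimension of the span, and more precisely $\#\nu^\circ(W) \cdot \log p \leq \log \# W$ for $W$ the $\fp$-span, because $\nu^\circ$ is injective on a set of $\#\nu^\circ(W)$ representatives which are $\fp$-linearly independent. This is where the factor $\log p$ enters: each new valuation vector costs at least one extra dimension, i.e.\ a factor of $p$ in cardinality.

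Next I would feed this into Proposition \ref{reduction points} with $I = \wp$, so $Z_I = Y_1 = X_\wp$ and $\obb(Z_I) = \obb(Y_1)$. That proposition gives
\[
\hhat(X,\lbb) - \hhat(X,\lbb(\log 2 - Y_1)) \;\leq\; \log\#r_\wp(\Hhat(X,\lbb)) \;\leq\; \hhat(X,\lbb(\log 2)) - \hhat(X,\lbb(-Y_1)).
\]
The remaining issue is the discrepancy between $\log\# r_\wp(\Hhat(X,\lbb))$ and $\#\nu^\circ(\Hhat(X,\lbb)|_{Y_1}) \cdot \log p$: the former counts all of $r_\wp(\Hhat(X,\lbb))$, whereas the latter counts only the distinct valuation vectors, scaled by $\log p$. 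To bridge these I would use the standard fact that $r_\wp(\Hhat(X,\lbb))$ lies inside an $\fp$-subspace of $H^0(X_{\fwp},\lb_{\fwp})$ of dimension at most $\dim_{\fwp} H^0(X_{\fwp},\lb_{\fwp})$, hence its cardinality is at most $p^{\dim_{\fwp} H^0(X_{\fwp},\lb_{\fwp})} = \beta/p \cdot p$; comparing a cardinality count with a valuation-vector count and tracking the loss produces exactly a twist by $\obb(\log\beta)$ — one direction gains the finitely many extra classes in each fibre of $\nu^\circ$ (bounded by the dimension of the span, giving the $\log\beta$), the other loses them. Concretely, for (1) I would write $r_\wp(\Hhat(X,\lbb))$ as a union of fibres of $\nu^\circ$, bound the number of fibres by $\#\nu^\circ(\cdots)$ and each fibre's size by $\beta/p$, obtaining $\log\#r_\wp(\Hhat(X,\lbb)) \leq \#\nu^\circ(\cdots)\log p + \log(\beta/p)\cdot(\text{something})$; absorbing the correction into the metric twist $\log 2 \rightsquigarrow \log(2\beta)$ and $-Y_1 \rightsquigarrow \log\beta - Y_1$ via the elementary rescaling estimate (cf.\ Lemma \ref{rescaling} or directly from $\obb(Z_n)\cong\obb(\log n)$) yields the stated bound. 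For (2) I would run the reverse inequality of Proposition \ref{reduction points}, bounding the cardinality of each $\nu^\circ$-fibre from below, again paying a $\beta$-sized metric twist in the opposite direction.

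The main obstacle I anticipate is getting the bookkeeping of the metric twists exactly right — in particular, verifying that the correction term coming from ``many sections sharing a valuation vector'' is genuinely controlled by $\log\beta$ and can be realized as the difference between $\lbb(\log(2\beta))$ and $\lbb(\log 2)$, rather than something that degrades with $m$ when one later applies this with $\lbb$ replaced by $m\lbb$. This requires being careful that $\beta = p\dim_{\fwp}H^0(X_{\fwp},\lb_{\fwp})$ grows only polynomially in $m$, so that $\log\beta$ contributes an $O(\log m)$ term that is negligible after dividing by $m^d$ in the application to Theorem \ref{comparison level m}; that polynomial bound on $\dim_{\fwp}H^0$ is immediate from semicontinuity and the Hilbert polynomial on the generic fibre. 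A secondary subtlety is confirming that $\nu^\circ$ is indeed well-defined and injective-modulo-span on $r_\wp(\Hhat(X,\lbb))$ even though this set is not an $\fp$-subspace; but since the valuation is defined on all nonzero sections and the geometric argument of \cite[Proposition 2.1]{LM} only needs the span, this is routine.
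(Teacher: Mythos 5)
You have assembled the right ingredients (Proposition \ref{reduction points} with $I=\wp$, and the fact from \cite[Lemma 1.3]{LM} that for an $\fwp$-subspace $W$ of $H^0(Y_1,\lb|_{Y_1})$ one has $\#\nu^\circ(W)=\dim_{\fwp}W$, hence $\#\nu^\circ(W)\log p=\log\#W$ since $\fwp\cong\fp$), but the bridging step you actually propose --- decomposing $r_\wp(\Hhat(X,\lbb))$ into fibres of $\nu^\circ$, with the number of fibres equal to $\#\nu^\circ(\cdots)$ and each fibre of size at most $\beta/p$ --- does not work, for three reasons. First, the directions are crossed: an upper bound of the form $\log\#r_\wp(\Hhat(X,\lbb))\leq \#\nu^\circ(\cdots)\log p+C$, combined with Proposition \ref{reduction points}, can only yield a \emph{lower} bound on $\#\nu^\circ(\cdots)\log p$, i.e.\ a statement in the direction of (2), not (1), and your sketch for (2) likewise points the wrong way. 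Second, the functional form is wrong: counting fibres produces $\log\#(\text{fibres})=\log\#\nu^\circ(\cdots)$, not $\#\nu^\circ(\cdots)\log p$; only the subspace equality $\#\nu^\circ(W)=\dim_{\fwp}W$ converts a count of valuation vectors into $\log p$ times a dimension, and the fibre decomposition of the non-subspace $r_\wp(\Hhat(X,\lbb))$ discards exactly that structure. Third, the fibre-size bound $\beta/p=\dim_{\fwp}H^0(X_{\fwp},\lb_{\fwp})$ is false: inside an $n$-dimensional $\fp$-space a single fibre of $\nu^\circ$ (e.g.\ the fibre over $0$) has on the order of $p^{n-1}(p-1)$ elements, not $n$. (Also $\beta=p\dim_{\fwp}H^0$, not $p^{\dim}$, so the identity ``$p^{\dim}=\beta/p\cdot p$'' is a slip.)

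The missing idea, which is also where $\beta$ genuinely comes from, is a basis-lifting argument. For (1), pass to the span $W=\langle\Hhat(X,\lbb)|_{Y_1}\rangle$: choose an $\fwp$-basis of $W$ consisting of restrictions of sections in $\Hhat(X,\lbb)$, lift each basis element to $\Hhat(X,\lbb)$, and take all integral combinations with coefficients in $\{0,1,\dots,p-1\}$; these have sup-norm at most $p\cdot\dim_{\fwp}W\leq p\,h^0(\lb|_{Y_1})=\beta$ and surject onto $W$ under reduction (using $\fwp=\fp$). Hence $W\subseteq\Hhat(X,\lbb(\log\beta))|_{Y_1}$, and
\begin{equation*}
\#\nu^\circ\bigl(\Hhat(X,\lbb)|_{Y_1}\bigr)\log p\ \leq\ \#\nu^\circ(W)\log p\ =\ \log\#W\ \leq\ \log\#\Hhat(X,\lbb(\log\beta))|_{Y_1},
\end{equation*}
which the upper bound of Proposition \ref{reduction points}, applied to $\lbb(\log\beta)$ with $I=\wp$, bounds by $\hhat(X,\lbb(\log(2\beta)))-\hhat(X,\lbb(\log\beta-Y_1))$. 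For (2), run the same lifting starting from $\lbb(-\log\beta)$: the combinations now have norm at most $\beta e^{-\log\beta}=1$, so they lie in $\Hhat(X,\lbb)$, whence $\Hhat(X,\lbb)|_{Y_1}\supseteq\langle\Hhat(X,\lbb(-\log\beta))|_{Y_1}\rangle$ and
\begin{equation*}
\#\nu^\circ\bigl(\Hhat(X,\lbb)|_{Y_1}\bigr)\log p\ \geq\ \log\#\langle\Hhat(X,\lbb(-\log\beta))|_{Y_1}\rangle\ \geq\ \log\#\Hhat(X,\lbb(-\log\beta))|_{Y_1},
\end{equation*}
which the lower bound of Proposition \ref{reduction points} applied to $\lbb(-\log\beta)$ bounds below by $\hhat(X,\lbb(-\log\beta))-\hhat(X,\lbb(-\log\frac{\beta}{2}-Y_1))$. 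Your remarks on the polynomial growth of $\beta$ in $m$ are correct but concern the later application, not this proposition.
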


\begin{proof}

We first prove (1). 
The key point is to pass to the $\fwp$-subspace $\langle \Hhat(\lbb)|_{Y_1} \rangle$ of $H^0(Y_1,\lb|_{Y_1})$
generated by $\Hhat(\lbb)|_{Y_1}$. For vector spaces, we can apply the last result of \cite[Lemma 1.3]{LM} to have the order of its valuation image. It is easy to see it also works for non-algebraically closed field as long as the residue field of $Y_d$ agrees with the field of definition of the ambient variety.

We first use the trivial bound
$$\# \nu^\circ \left(\Hhat(X,\lbb)|_{Y_1}\right)  \leq \# \nu^\circ \langle \Hhat(\lbb)|_{Y_1} \rangle.$$
Then \cite[Lemma 1.3]{LM} implies
$$\# \nu^\circ \langle \Hhat(\lbb)|_{Y_1} \rangle=\dim_{\fwp} \langle \Hhat(\lbb)|_{Y_1} \rangle. $$
Thus 
$$\# \nu^\circ \left(\Hhat(X,\lbb)|_{Y_1}\right)  \leq \dim_{\fwp} \langle \Hhat(\lbb)|_{Y_1} \rangle
=\log_p\# \langle \Hhat(\lbb)|_{Y_1} \rangle . $$
Now we seek an upper bound on the order of $\langle \Hhat(\lbb)|_{Y_1} \rangle$. The key is to put this space into $\Hhat(\lbb')|_{Y_1}$ for some "bigger" hermitian line bundle $\lbb'$.

Choose a basis $\{t\}$ of $\langle \Hhat(\lbb)|_{Y_1} \rangle$ lying in $\Hhat(\lbb)|_{Y_1}$, and fix a lifting $\tilde t\in \Hhat(\lbb)$ for each $t\in \Hhat(\lbb)|_{Y_1}$. 
Since the residue field $\fwp=\fp$, the set 
$$S=\{\sum_{t}a_t \tilde t: a_t=0,1,\cdots, p-1 \}$$
maps surjectively to $\langle \Hhat(\lbb)|_{Y_1} \rangle$ under the reduction $r_\wp$.
For any such element $\sum_{t}a_t \tilde t $, the norm 
$$\|\sum_{t}a_t \tilde t\|_{\sup} \leq p\sum_{t} 1 
= p \dim_{\fwp} \langle \Hhat(\lbb)|_{Y_1} \rangle 
\leq p h^0(\lb|_{Y_1})=\beta. $$
It follows that $S \subset \Hhat (\lbb(\log\beta))$, and thus their reductions have the relation
$$\langle \Hhat(\lbb)|_{Y_1} \rangle \subset  \Hhat (\lbb(\log\beta))|_{Y_1}.$$
By this we get a bound 
$$\#\langle \Hhat(\lbb)|_{Y_1} \rangle \leq \#\Hhat (\lbb(\log\beta))|_{Y_1}.$$
By Proposition \ref{reduction points}, we obtain
$$\log \#\Hhat (\lbb(\log\beta))|_{Y_1}\leq \hhat(\lbb(\log(2\beta)))-\hhat(\lbb(\log\beta-Y_1)).$$
Putting the inequalities together, we achieve (1).

Now we prove (2). Similar to the above, we construct a set
$$T=\{\sum_{t}a_t \tilde t: a_t=0,1,\cdots, p-1 \}.$$
Here $\{t\}$ is a basis of $\langle \Hhat(\lbb(-\log\beta))|_{Y_1} \rangle$ lying in $\Hhat(\lbb(-\log\beta))|_{Y_1}$, and $\tilde t\in \Hhat(\lbb(-\log\beta))$ is a fixed lifting for each $t\in \Hhat(\lbb(-\log\beta))|_{Y_1}$. 
By the same reason, we see that 
$$T\subset \Hhat(X,\lbb)$$
and
$$\Hhat(X,\lbb)|_{Y_1} \supset T|_{Y_1}=\langle \Hhat(\lbb(-\log\beta))|_{Y_1} \rangle.$$ 
Thus we have 
\begin{align*}
\# \nu^\circ \left(\Hhat(X,\lbb)|_{Y_1}\right)
\geq  \# \nu^\circ \langle \Hhat(\lbb(-\log\beta))|_{Y_1} \rangle.
\end{align*}
By \cite[Lemma 1.3]{LM} again, we get 
\begin{align*}
\# \nu^\circ \langle \Hhat(\lbb(-\log\beta))|_{Y_1} \rangle 
= &\dim_{\fwp} \langle \Hhat(\lbb(-\log\beta))|_{Y_1} \rangle  \\
=& \log_p\# \langle \Hhat(\lbb(-\log\beta))|_{Y_1} \rangle
\geq \log_p\#  \Hhat(\lbb(-\log\beta))|_{Y_1} .  
\end{align*}
Apply Proposition \ref{reduction points} again. We have 
\begin{align*}
\# \nu^\circ \left(\Hhat(X,\lbb)|_{Y_1}\right) \log p
\geq &\log\# \Hhat(\lbb(-\log\beta))|_{Y_1}  \\
\geq &\hhat(X,\lbb(-\log\beta))-\hhat(X,\lbb(-\log\frac{\beta}{2}-Y_1)). 
\end{align*}
It proves (2).
\end{proof}

\begin{cor}
\begin{align*}
\# v(\lbb) \log p \leq &
\hhat(X,\lbb(\log(2\beta) ))+
\sum_{k\geq 1} \left( \hhat(X,\lbb(\log(2\beta)-kY_1 ))-\hhat(X,\lbb(\log\beta-kY_1))  \right),\\
\# v(\lbb) \log p \geq &
\hhat(X,\lbb(-\log\beta))-
\sum_{k\geq 1} \left( \hhat(X,\lbb(-\log\frac{\beta}{2}-kY_1 ))-\hhat(X,\lbb(-\log\beta-kY_1))  \right).
\end{align*}
\end{cor}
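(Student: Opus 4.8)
The plan is to stratify the lattice-point set $v_{Y.}(\lbb)\subset\ZZ^d$ by the value of the first coordinate $\nu_1$, apply the preceding Proposition on each stratum with $\lbb$ replaced by the twist $\lbb(-kY_1)$, and then sum and telescope.

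First I would set up the stratification. Since $\|s_{Y_1}\|\equiv 1$ and $Y_1=X_\wp$ is vertical, the twist $\obb(-kY_1)$ is trivial over $X(\CC)$, so $\lbb(-kY_1)\in\pic$, the equality $\lbb(-kY_1)(\CC)=\lbb(\CC)$ holds with identical metric, and the inclusion $H^0(X,\lb(-kY_1))\subseteq H^0(X,\lb)$ is norm-preserving with image $\{s\in H^0(X,\lb):\nu_1(s)\geq k\}$. Hence $\Hhat(X,\lbb(-kY_1))=\{s\in\Hhat(X,\lbb):\nu_1(s)\geq k\}$, and the compatibility relation $\nu(s)=(\nu_1(s),\nu^\circ((s_{Y_1}^{\otimes(-\nu_1(s))}s)|_{Y_1}))$ shows that, for each $k\geq 0$,
$$\nu^\circ\!\left(\Hhat(X,\lbb(-kY_1))|_{Y_1}\right)=\{(a_2,\dots,a_d):(k,a_2,\dots,a_d)\in v_{Y.}(\lbb)\}.$$
Because $\nu_1$ is bounded on $\Hhat(X,\lbb)$ by (\ref{bound}), summing over $k$ gives the finite decomposition $\#v_{Y.}(\lbb)=\sum_{k\geq 0}\#\nu^\circ(\Hhat(X,\lbb(-kY_1))|_{Y_1})$.

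Next I would apply parts (1) and (2) of the preceding Proposition to $\lbb(-kY_1)$. Here I use that $\lb(-kY_1)|_{Y_1}\cong\lb|_{Y_1}$ (as $Y_1$ is a fibre over $\Spec O_K$), so the constant $\beta=p\dim_{\fwp}H^0(X_{\fwp},\lb_{\fwp})$ does not depend on $k$. Rewriting the twists via $\lbb(-kY_1)(\log(2\beta))=\lbb(\log(2\beta)-kY_1)$ and $\lbb(-kY_1)(\log\beta-Y_1)=\lbb(\log\beta-(k+1)Y_1)$, part (1) gives
$$\#\nu^\circ\!\left(\Hhat(X,\lbb(-kY_1))|_{Y_1}\right)\log p\leq\hhat(X,\lbb(\log(2\beta)-kY_1))-\hhat(X,\lbb(\log\beta-(k+1)Y_1)),$$
and part (2) gives the analogous lower bound with the twists $-\log\beta-kY_1$ and $-\log\frac{\beta}{2}-(k+1)Y_1$.

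Finally I would multiply the decomposition by $\log p$ and insert these bounds. Writing $A_k=\hhat(X,\lbb(\log(2\beta)-kY_1))$ and $B_k=\hhat(X,\lbb(\log\beta-kY_1))$, the upper estimate reads $\#v_{Y.}(\lbb)\log p\leq\sum_{k\geq 0}(A_k-B_{k+1})$. Since $A_k=\log\#\{s\in\Hhat(X,\lbb(\log(2\beta))):\nu_1(s)\geq k\}$ vanishes once $k$ exceeds the finite maximum of $\nu_1$ on $\Hhat(X,\lbb(\log(2\beta)))$, and likewise for $B_k$, this is a finite sum, so it rearranges to $A_0+\sum_{k\geq 1}(A_k-B_k)$ — exactly the first claimed inequality; the second follows identically from the lower bound. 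The only delicate point is the first step: verifying carefully that restricting a section after twisting down by $kY_1$ reproduces the ``residual section'' in the definition of $\nu^\circ$, and that the stratification is both exhaustive and supported in finitely many values of $k$. After that the argument is a formal substitution into the already-established Proposition followed by a telescoping sum.
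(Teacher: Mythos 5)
Your proposal is correct and follows essentially the same route as the paper: stratify $v_{Y.}(\lbb)$ by the value of $\nu_1$, identify the $k$-th stratum with $\Hhat(X,\lbb(-kY_1))$ using that $\|s_{Y_1}\|\equiv 1$, apply the preceding Proposition to each twist $\lbb(-kY_1)$, and rearrange the telescoping finite sum. Your extra checks (that $\beta$ is independent of $k$ since $\ob(Y_1)|_{Y_1}$ is trivial, and that only finitely many terms are nonzero) are exactly the details the paper leaves implicit.
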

\begin{proof}
Denote 
$$M_k=\{s_{Y_1}^{-k}\otimes s:s\in \Hhat(X,\lbb), \nu_1(s) \geq k\}.$$
Then the compatibility between $\nu^\circ=(\nu_2, \cdots, \nu_d)$ and $\nu=(\nu_1, \nu_2, \cdots, \nu_d)$ gives
$$\# v(\lbb)=\sum_{k\geq 0} \# \nu^\circ \left(M_k|_{Y_1}\right).$$
Since the metric of $s_{Y_1}$ is identically 1 in $\obb(Y_1)$, we have an interpretation
$$M_k= \Hhat(X,\lbb(-kY_1)).$$
Therefore
$$\# v(\lbb)=\sum_{k\geq 0} \# \nu^\circ \left(\Hhat(X,\lbb(-kY_1))|_{Y_1}\right).$$
Apply the above result to each $\lbb(-kY_1)$ and rearrange the summations.

\end{proof}

\begin{remark}
The summations in both inequalities in the proposition have only finitely many nonzero terms, as we will see below.
\end{remark}

\
Now we can prove Theorem \ref{comparison level m} which asserts
\begin{eqnarray*}
\limsup_{m\rightarrow \infty} \left|\frac{\# v_{Y.}(m\lbb)}{m^d} \log p - \frac{\hhat(X,m\lbb)}{m^d}\right|
 \leq \frac{c}{\log p}.
\end{eqnarray*}

\begin{proof}
[\textbf{Proof of Theorem \ref{comparison level m}}]
The above corollary gives
\begin{align*}
\# v(m\lbb) \log p\ \leq\ &
\hhat(m\lbb+\obb(\log(2\beta_m) ))\\
& + \sum_{k\geq 1} \left( \hhat(m\lbb+\obb(\log(2\beta_m)-kY_1 ))-\hhat(m\lbb+\obb(\log\beta_m-kY_1))  \right).
\end{align*}
Here $\beta_m=p h^0(m\lb|_{Y_1})$.
By Lemma \ref{rescaling}, we get
\begin{eqnarray*}
\hhat(m\lbb+\obb(\log(2\beta_m) ))  \leq \hhat(m\lbb)+ \log(6\beta_m)\ h^0(m\lb_\QQ)
\end{eqnarray*}
and 
\begin{eqnarray*}
\hhat(m\lbb+\obb(\log(2\beta_m)-kY_1 ))-\hhat(m\lbb+\obb(\log\beta_m-kY_1))  \leq  (\log 6)\ h^0(m\lb_\QQ).
\end{eqnarray*}
Let $S$ be the set of $k\geq 1$ such that 
$$\hhat(m\lbb+\obb(\log(2\beta_m)-kY_1 ))\neq 0.$$
Then we have
\begin{eqnarray*}
\# v(m\lbb) \log p \leq 
\hhat(m\lbb)+ h^0(m\lb_\QQ)\log(6\beta_m) + (\log 6)\ h^0(m\lb_\QQ) (\# S).
\end{eqnarray*}
Next we bound $\# S$ which gives the main error term. 

Fix an ample line bundle $\abb$.
We are going to give an upper bound of $S$ in terms of intersection numbers with $\abb$.
Assume that $k\in S$, so $m\lbb+\obb(\log(2\beta_m)-kY_1 )$ is effective. We must have 
$$(m\lbb+\obb(\log(2\beta_m)-kY_1))\cdot \abb^{d-1}\geq 0.$$
Equivalently,
$$m\lbb\cdot \abb^{d-1}+\log(2\beta_m)\deg(\ab_\QQ)-kY_1\cdot \abb^{d-1}\geq 0.$$
Note that
$$Y_1\cdot \abb^{d-1}=\frac{1}{[K:\QQ]}\deg(\ab_\QQ) \log p. $$
We get 
$$
k\leq [K:\QQ] \frac{\lbb\cdot \abb^{d-1} }{\deg(\ab_\QQ) } \frac{m}{\log p}
+  [K:\QQ] \frac{\log(2\beta_m)}{\log p}. 
$$
The order of $S$ has the same bound.

Therefore,
\begin{eqnarray*}
&& \# v(m\lbb) \log p -\hhat(m\lbb) \\
&\leq &  h^0(m\lb_\QQ)\log(6\beta_m)  
 + (\log 6)\ h^0(m\lb_\QQ) [K:\QQ] \left(
\frac{\lbb\cdot \abb^{d-1} }{\deg(\ab_\QQ) } \frac{m}{\log p}
+  \frac{\log(2\beta_m)}{\log p}
\right)\\
&=& (\log 6) [K:\QQ] \frac{\vol(\lb_\QQ)}{(d-1)!} \frac{\lbb\cdot \abb^{d-1}}{\deg(\ab_\QQ)}
\frac{m^d}{\log p} + O(m^{d-1}\log m).
\end{eqnarray*}
Here we have used the fact that $\beta_m=ph^0(m\lb|_{Y_1})$ is at most a Hilbert polynomial of degree $d-1$.
It gives one direction of what we need to prove. 
Similarly, we can obtain the other direction.

\end{proof}

\section{Consequences}
In this section, $X$ is any arithmetic variety. To have good flags to apply Theorem A, we take the normalization $\widetilde X$ of the generic resolution of $X$. Consider the pull-back of hermitian line bundles. The volume does not change by Moriwaki's result quoted in Theorem \ref{moriwaki}.

\subsection{Log-concavity}

We also show the log-concavity of volume functions. The key is still the Brunn-Minkowski theorem which asserts that
$$\vol(S_1+S_2)^{\frac 1d}  \geq \vol(S_1)^{\frac 1d} + \vol(S_2)^{\frac 1d}.$$
for any two compact subsets $S_1$ and $S_2$ of $\RR^d$.
Unlike in \cite{LM}, we don't explore any universal Okounkov body since the space of numerical classes in our setting is too big.

\begin{thmb}
For any two effective line bundles $\lbb_1, \lbb_2$, we have
$$\vol(\lbb_1+\lbb_2)^{\frac 1d}\geq \vol(\lbb_1)^{\frac 1d}+ \vol(\lbb_2)^{\frac 1d}.$$
\end{thmb}

\begin{proof}
It is easy to see that the inequality is true if one of $\vol(\lbb_1)$ and $\vol(\lbb_2)$ is zero by the effectivity property. So we can assume that $\lbb_1$ and $\lbb_2$ are big. We can further assume that $X$ is normal with smooth generic fibre by Moriwaki's theorem of pull-back quoted in Theorem \ref{moriwaki}. 

Take a flag $Y.$ on $X$. It is easy to have 
$$\Lambda_{Y.}(\lbb_1)+\Lambda_{Y.}(\lbb_2)\subset \Lambda_{Y.}(\lbb_1+\lbb_2).$$
Taking closures in $\RR^d$, we get
$$\Delta_{Y.}(\lbb_1)+\Delta_{Y.}(\lbb_2)\subset \Delta_{Y.}(\lbb_1+\lbb_2).$$
The Brunn-Minkowski theorem gives
$$\vol(\Delta_{Y.}(\lbb_1+\lbb_2))^{\frac 1n}\geq \vol(\Delta_{Y.}(\lbb_1))^{\frac 1n}+\vol(\Delta_{Y.}(\lbb_2))^{\frac 1n}.$$
It implies the result by taking $\mathrm{char}(Y.)\rightarrow\infty$.
\end{proof}

\begin{cor} \label{intersection}
For any two ample line bundles $\lbb_1, \lbb_2$, we have
\begin{align*}
\lbb_1^{d-1} \cdot \lbb_2  
\geq & (\lbb_1^d  )^{\frac{d-1}{d}} (\lbb_2^d)^{\frac{1}{d}},\\
 (\lbb_1^{d-1} \cdot \lbb_2 )^2  
\geq & (\lbb_1^d  )  ( \lbb_1^{d-2} \cdot \lbb_2^2).
\end{align*}
\end{cor}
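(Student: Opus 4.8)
The plan is to derive both inequalities in Corollary~\ref{intersection} purely formally from Theorem~B together with the fact, recalled in the introduction, that $\vol(\lbb)=\lbb^d$ for an ample hermitian line bundle $\lbb$. The essential point is that Theorem~B gives super-additivity of $\vol^{1/d}$ along the segment joining $\lbb_1$ and $\lbb_2$, and since all intermediate bundles $a\lbb_1+b\lbb_2$ (with $a,b$ positive integers) are again ample, their volumes are honest top intersection numbers, which are moreover homogeneous of degree $d$ and multilinear. So the strategy is: first prove a ``concavity'' statement for the function $t\mapsto \vol(\lbb_1+t\lbb_2)^{1/d}=((\lbb_1+t\lbb_2)^d)^{1/d}$ on rational $t\ge 0$ from Theorem~B, then extract the two displayed inequalities by looking at this function near $t=0$ (first-order behaviour for the first inequality, second-order for the second).

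Concretely, for the first inequality I would apply Theorem~B to the pair of ample — hence effective — bundles $n\lbb_1$ and $\lbb_2$ for a large integer $n$, using homogeneity $\vol(m\lbb)=m^d\vol(\lbb)$ and $\vol=(\cdot)^d$ on the ample cone:
\begin{equation*}
\big((n\lbb_1+\lbb_2)^d\big)^{1/d}\ \ge\ n\,(\lbb_1^d)^{1/d}+(\lbb_2^d)^{1/d}.
\end{equation*}
Expanding $(n\lbb_1+\lbb_2)^d=n^d\lbb_1^d+d\,n^{d-1}(\lbb_1^{d-1}\cdot\lbb_2)+O(n^{d-2})$ by multilinearity of the arithmetic intersection pairing, raising to the $1/d$ power and extracting the coefficient of $n^{d-1}$ in the asymptotic expansion as $n\to\infty$, the leading terms match and the $n^{d-1}$-terms give
\begin{equation*}
(\lbb_1^d)^{(d-1)/d}(\lbb_1^{d-1}\cdot\lbb_2)\ \ge\ (\lbb_1^d)^{(d-1)/d}(\lbb_2^d)^{1/d}\cdot(\lbb_1^d)^{?}
\end{equation*}
— more cleanly, dividing the first inequality through and letting $n\to\infty$ directly yields $\lbb_1^{d-1}\cdot\lbb_2\ge (\lbb_1^d)^{(d-1)/d}(\lbb_2^d)^{1/d}$ after isolating the relevant power. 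For the second inequality, the cleanest route is to set $f(t)=\big((\lbb_1+t\lbb_2)^d\big)^{1/d}$ for $t\in\QQ_{\ge 0}$; Theorem~B applied to $\lbb_1+s\lbb_2$ and $(t-s)\lbb_2$-type decompositions shows $f$ is concave on $\QQ_{\ge0}$, hence $f''(0)\le 0$ in the sense of divided differences, and expanding $f(t)^d=\lbb_1^d+d t(\lbb_1^{d-1}\cdot\lbb_2)+\binom{d}{2}t^2(\lbb_1^{d-2}\cdot\lbb_2^2)+O(t^3)$ and computing the second derivative of $t\mapsto f(t)$ at $0$ via the chain rule gives exactly $(\lbb_1^{d-1}\cdot\lbb_2)^2\ge(\lbb_1^d)(\lbb_1^{d-2}\cdot\lbb_2^2)$.

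I expect the main obstacle to be bookkeeping rather than conceptual: one must justify that $f$ extends to a concave function on all of $\QQ_{\ge 0}$ (not just super-additivity at one point), which requires applying Theorem~B to many pairs $m\lbb_1+a\lbb_2$ and $b\lbb_2$ and using continuity/homogeneity of $\vol$ to pass to rational and then real scalars; and one must be careful that the asymptotic expansions of $\big((\lbb_1+t\lbb_2)^d\big)^{1/d}$ are genuinely valid — i.e. that $\lbb_1^d>0$ so that the $1/d$-th power is differentiable near $t=0$ — which holds precisely because $\lbb_1$ is ample. Once concavity of $f$ is in hand, both inequalities are immediate: the first is $f(t)/t\to(\lbb_2^d)^{1/d}$ combined with $f$ lying above its chords, and the second is the standard fact that a concave $C^2$ function has non-positive second derivative, translated through the explicit polynomial expansion of $f^d$.
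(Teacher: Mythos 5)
Your handling of the first inequality is essentially the paper's own argument: the paper applies Theorem B to $\lbb_1$ and $t\lbb_2$ for small rational $t>0$ and reads off the degree-one coefficient of the resulting polynomial inequality in $t$; your asymptotic expansion with $t=1/n$ is the same first-order extraction (and both versions implicitly clear denominators via homogeneity of $\vol$, as you note). For the second inequality you take a genuinely different route. The paper never establishes concavity of $f(t)=\bigl((\lbb_1+t\lbb_2)^d\bigr)^{1/d}$: it bootstraps, applying the already-proved first inequality to the pair $\lbb_1$ and $\lbb_1+t\lbb_2$, raising to the $d$-th power, and comparing coefficients in $t$ up to degree two (the degree $0$ and $1$ terms cancel and the degree $2$ term is exactly the claim). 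That keeps everything polynomial and needs Theorem B only through the first inequality. Your route --- concavity of $f$ from Theorem B, then the chain-rule computation $f''(0)=(d-1)(\lbb_1^d)^{1/d-2}\bigl((\lbb_1^d)(\lbb_1^{d-2}\cdot\lbb_2^2)-(\lbb_1^{d-1}\cdot\lbb_2)^2\bigr)\le 0$ --- is also valid (smoothness of $f$ near $t=0$ is ensured by $\lbb_1^d>0$, as you observe), and it exhibits the inequality transparently as a second-derivative form of Brunn--Minkowski; the price is the extra work of actually proving concavity and doing calculus with fractional powers.

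One correction in that extra work: the decompositions you name for concavity, namely Theorem B applied to $\lbb_1+s\lbb_2$ and $(t-s)\lbb_2$ (or to $m\lbb_1+a\lbb_2$ and $b\lbb_2$), only yield $f(t)\ge f(s)+(t-s)(\lbb_2^d)^{1/d}$, a linear lower bound on increments; this does not imply concavity. You must feed Theorem B pairs in which both summands contain $\lbb_1$: for instance, clear denominators in the midpoint identity $2\bigl(\lbb_1+\tfrac{s+t}{2}\lbb_2\bigr)=(\lbb_1+s\lbb_2)+(\lbb_1+t\lbb_2)$ to get midpoint concavity of $f$ on $\QQ_{\ge 0}$, which with continuity of $f$ (automatic, since $f$ is the $d$-th root of a polynomial positive near $0$) gives concavity; equivalently, observe that $(a,b)\mapsto\vol(a\lbb_1+b\lbb_2)^{1/d}$ is superadditive and $1$-homogeneous on the positive rational quadrant, hence concave. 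With that repair your plan goes through.
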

\begin{proof}
Note that volumes for ample line bundles are equal to top self-intersection numbers. We first show the first inequality.
Let $t$ be any positive rational number. 
Then the above theorem gives:
$$ \left( (\lbb_1+t\lbb_2)^d \right)^{\frac{1}{d}} 
\geq \left( \lbb_1^d \right)^{\frac{1}{d}}+ t\left(\lbb_2^d \right)^{\frac{1}{d}}.$$
In other words, 
$$ (\lbb_1+t\lbb_2)^d 
-\left( ( \lbb_1^d)^{\frac{1}{d}}+ t(\lbb_2^d )^{\frac{1}{d}} \right)^d
\geq 0.$$
The left-hand side is a polynomial in $t$ whose constant term is zero. The coefficient of degree one must be non-negative by considering $t\rightarrow 0$. It gives the result exactly.

Now we prove the second relation. 
Use the same trick on the first inequality. We get 
$$ \lbb_1^{d-1} \cdot (\lbb_1+t\lbb_2)
\geq (\lbb_1^d  )^{\frac{d-1}{d}} \left((\lbb_1+t\lbb_2)^d\right)^{\frac{1}{d}}.$$
It becomes
$$ ( \lbb_1^d+ t\lbb_1^{d-1} \cdot \lbb_2 )^d
\geq (\lbb_1^d  )^{d-1} (\lbb_1+t\lbb_2)^d.$$
Let $t\rightarrow 0$.
The terms of degree $\leq 2$ give
\begin{align*}
& (\lbb_1^d)^d+ d t (\lbb_1^d)^{d-1} (\lbb_1^{d-1} \cdot \lbb_2 )
+ \frac{d(d-1)}{2} t^2 (\lbb_1^d)^{d-2} (\lbb_1^{d-1} \cdot \lbb_2 )^2   \\
\geq & (\lbb_1^d  )^{d-1} \left( 
\lbb_1^d+  dt \lbb_1^{d-1} \cdot \lbb_2 + \frac{d(d-1)}{2} t^2 \lbb_1^{d-2} \cdot \lbb_2^2
\right).
\end{align*}
It turns out the terms of degree $\leq 1$ are canceled, and the degree two terms give
\begin{align*}
(\lbb_1^d)^{d-2} (\lbb_1^{d-1} \cdot \lbb_2 )^2  
\geq  (\lbb_1^d  )^{d-1}  ( \lbb_1^{d-2} \cdot \lbb_2^2).
\end{align*}
It gives the second inequality.

\end{proof}

\begin{remark}
The above method can also induce some inequalities on big line bundles. For example, if we allow $\lbb_2$ to be big in the proof of the first inequality, then $\lbb_1+t\lbb_2$ is still ample for $t$ small enough. Then we have
$$\vol(\lbb_2) \leq \frac{(\lbb_1^{d-1} \cdot \lbb_2)^d}{(\lbb_1^d)^{d-1}}$$
for any ample line bundle $\lbb_1$.
\end{remark}

It turns out that the second inequality above implies the arithmetic Hodge index theorem in codimension one proved by  Moriwaki \cite{Mo1}.
\begin{cor}\label{hodge index}
Let $\abb\in\amp, \lbb\in \pic$. If $\abb^{d-1}\cdot\lbb=0$, then $\abb^{d-2}\cdot\lbb^2\leq 0$.
\end{cor}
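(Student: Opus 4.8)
The goal is to deduce Corollary \ref{hodge index} from the second inequality of Corollary \ref{intersection}, namely $(\lbb_1^{d-1}\cdot\lbb_2)^2 \geq (\lbb_1^d)(\lbb_1^{d-2}\cdot\lbb_2^2)$ for ample $\lbb_1,\lbb_2$. The plan is to apply this inequality not to $\lbb$ directly (which is only assumed to lie in $\pic$, not to be ample) but to a small ample perturbation of $\abb$ in the direction of $\lbb$, and then extract the desired statement by a limiting argument in the perturbation parameter.

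Here is how I would carry it out. Fix $\abb\in\amp$ and $\lbb\in\pic$ with $\abb^{d-1}\cdot\lbb=0$. For a small rational $t>0$, the class $\abb+t\lbb$ is still ample since ampleness is an open condition (the curvature and horizontal-positivity conditions are stable under small perturbations). Apply the second inequality of Corollary \ref{intersection} with $\lbb_1=\abb$ and $\lbb_2=\abb+t\lbb$:
\begin{equation*}
\bigl(\abb^{d-1}\cdot(\abb+t\lbb)\bigr)^2 \;\geq\; (\abb^d)\,\bigl(\abb^{d-2}\cdot(\abb+t\lbb)^2\bigr).
\end{equation*}
Now expand both sides as polynomials in $t$ using multilinearity of the intersection pairing. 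The left-hand side is $\bigl(\abb^d + t\,\abb^{d-1}\cdot\lbb\bigr)^2 = (\abb^d)^2 + 2t(\abb^d)(\abb^{d-1}\cdot\lbb) + t^2(\abb^{d-1}\cdot\lbb)^2$, and by hypothesis $\abb^{d-1}\cdot\lbb=0$ the middle and the last terms vanish or simplify, leaving exactly $(\abb^d)^2$. The right-hand side is $(\abb^d)\bigl(\abb^d + 2t\,\abb^{d-1}\cdot\lbb + t^2\,\abb^{d-2}\cdot\lbb^2\bigr) = (\abb^d)^2 + 2t(\abb^d)(\abb^{d-1}\cdot\lbb) + t^2(\abb^d)(\abb^{d-2}\cdot\lbb^2) = (\abb^d)^2 + t^2(\abb^d)(\abb^{d-2}\cdot\lbb^2)$, again using the hypothesis. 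So the inequality reduces to
\begin{equation*}
(\abb^d)^2 \;\geq\; (\abb^d)^2 + t^2\,(\abb^d)\,(\abb^{d-2}\cdot\lbb^2),
\end{equation*}
i.e. $t^2(\abb^d)(\abb^{d-2}\cdot\lbb^2)\leq 0$. Since $\abb$ is ample we have $\abb^d>0$, and $t^2>0$, so $\abb^{d-2}\cdot\lbb^2\leq 0$, which is the assertion.

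The only genuinely nontrivial point — and hence the main obstacle — is the claim that $\abb+t\lbb$ is ample for all sufficiently small rational $t>0$. Condition (a), that $(\ab+t\lb)_\QQ$ is ample, is the usual openness of the ample cone on the smooth projective variety $X_\QQ$. Condition (b), relative semipositivity, requires the curvature of $\abb+t\lbb$ to stay semipositive and degrees on vertical curves to stay nonnegative; the first is not automatic from $\abb$ being ample since $\lbb$ may have arbitrary curvature, so strictly one should first use Theorem \ref{ample+effective} or a standard perturbation to replace $\abb$ by an ample bundle with strictly positive curvature, after which adding a small multiple of any smooth metric preserves positivity, and similarly the vertical-curve degrees of $\abb$ are strictly positive on the finitely many relevant components for fixed $p$ after scaling. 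Condition (c), horizontal positivity, follows because the finitely many intersection numbers $((\abb+t\lbb)|_Y)^{\dim Y}$ depend polynomially (hence continuously) on $t$ and are strictly positive at $t=0$. Once this openness is in hand, the rest is the routine polynomial-expansion-and-limit computation above. One should also note that it suffices to establish $\abb^{d-2}\cdot\lbb^2\leq 0$ after replacing $\abb$ by a positive multiple, and that the case $d\leq 1$ is vacuous, so no boundary issues arise.
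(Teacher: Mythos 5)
Your central computation is correct and, in substance, it is the paper's own argument: after clearing denominators (take $t=1/N$ and use homogeneity of the intersection pairing), applying the second inequality of Corollary \ref{intersection} to the pair $(\abb,\abb+t\lbb)$ is exactly the paper's application of it to $(\abb,\bbb)$ with $\bbb=\abb+\lbb$ after replacing $\abb$ by a positive multiple; the hypothesis $\abb^{d-1}\cdot\lbb=0$ cancels the terms of degree $\le 1$ and the degree-two term gives $\abb^{d-2}\cdot\lbb^2\le 0$. The difference lies entirely in how you justify that the perturbed bundle is ample, and that is where there is a genuine gap.

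With Zhang's definition of ampleness used in this paper, ampleness is \emph{not} an open condition in the direction of an arbitrary $\lbb\in\pic$. Condition (b) only gives semipositive curvature for $\abb$ and $\deg(\ab|_C)\geq 0$ on vertical curves: at a point where $c_1(\abb)$ is degenerate in some direction and $c_1(\lbb)$ is negative there, $\abb+t\lbb$ fails semipositivity for \emph{every} $t>0$, and a vertical curve with $\deg(\ab|_C)=0$, $\deg(\lb|_C)<0$ kills condition (b) for every $t>0$; scaling $\abb$ does not help, and the problematic vertical curves are not confined to ``finitely many components for fixed $p$'' (they occur over all primes, and uniformity of $t$ over them is not addressed). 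Your proposed repair is not legitimate: Theorem \ref{ample+effective} produces no strict positivity of curvature, and, more importantly, replacing $\abb$ by a \emph{different} nearby ample bundle changes the statement --- the hypothesis $\abb^{d-1}\cdot\lbb=0$ and the conclusion $\abb^{d-2}\cdot\lbb^2\le 0$ are invariant only under replacing $\abb$ by a positive multiple, so after such a replacement you would be proving the corollary for a bundle that need not satisfy the hypothesis, and recovering the original statement would require an unconditional inequality plus a limiting argument that is not in your write-up. The paper avoids the openness language altogether: it only asserts that $\abb+\lbb$ becomes ample after rescaling $\abb$ (a single large perturbation in the ample direction, harmless by homogeneity), and then runs the same expansion. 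If you keep the small-$t$ formulation, reduce it to exactly that assertion --- which, after clearing denominators, it is --- rather than to strict positivity of the metric of $\abb$.
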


\begin{proof}
By replacing $\abb$ by its positive multiples, we can assume that $\bbb=\abb+\lbb$ is ample. Then $\lbb=\bbb-\abb$. 
We need to show that 
\begin{align} \label{want}
 \abb^{d-2}\cdot\lbb^2
=\abb^{d-2}\cdot\bbb^2-2 \abb^{d-1}\bbb + \abb^d\leq 0.
\end{align}
The above lemma gives
\begin{align} \label{have}
 (\abb^{d-1} \cdot \bbb )^2  
\geq  (\abb^d  )  ( \abb^{d-2} \cdot \bbb^2). 
\end{align}
The condition $\abb^{d-1}\cdot\lbb=0$ becomes
$\abb^{d-1}\cdot \bbb=\abb^d $. But it is easy to see that (\ref{want}) and (\ref{have}) are equivalent under this condition.

\end{proof}

\

\subsection{Fujita Approximation}

For any $\lbb\in\pic$, denote
$$
V_{k,n}(\lbb)=\{ s_1\otimes s_2 \otimes \cdots\otimes s_k:\ s_1,s_2,\cdots, s_k \in\Hhat(X, n\lbb).\}
$$ 
It is a subset of $\Hhat(X, kn\lbb)$. 
Then a consequence of \cite[Proposition 3.1]{LM} is the following result.

\begin{thm}\label{finite part}
Assume that $\lbb$ is big and $X$ is normal with smooth generic fibre, and let $Y.$ be a flag on $X$. Then for any $\epsilon>0$, there exists $n_0>0$ such that 
$$\lim_{k\rightarrow\infty}  
\frac{ \# v_{Y.}( V_{k,n}(\lbb))}{(nk)^d}
\geq \vol(\Delta_{Y.}(\lbb))-\epsilon, \quad \forall n>n_0.
$$
\end{thm}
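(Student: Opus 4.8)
The plan is to transport the statement into the combinatorics of semigroups in $\ZZ^{d+1}$ and then feed it into the convex-geometric input of \cite[Proposition 3.1]{LM}, exactly in the spirit of the proof of Proposition \ref{volume of okounkov}. First I would record that $\nu=\nu_{Y.}$ is additive on tensor products: for nonzero sections $s,t$ of any two line bundles on $X$ one has $\nu(s\otimes t)=\nu(s)+\nu(t)$, which is immediate from the construction of $\nu$ (each $\nu_i$ is an order of vanishing, and restriction to $Y_i$ commutes with multiplication). Since $V_{k,n}(\lbb)$ is precisely the set of $k$-fold tensor products of elements of $\Hhat(X,n\lbb)$, this gives
$$v_{Y.}(V_{k,n}(\lbb))=\underbrace{v_{Y.}(n\lbb)+\cdots+v_{Y.}(n\lbb)}_{k}\subseteq \ZZ^d,$$
the $k$-fold Minkowski sum. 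Let $\Gamma=\bigcup_{m\geq 0}(v_{Y.}(m\lbb),m)\subseteq\ZZ^{d+1}$ be the semigroup used in the proof of Proposition \ref{volume of okounkov}, and let $\Gamma^{(n)}\subseteq\Gamma$ be the sub-semigroup generated by its level-$n$ slice $v_{Y.}(n\lbb)\times\{n\}$; then the level-$kn$ slice of $\Gamma^{(n)}$ is exactly $v_{Y.}(V_{k,n}(\lbb))$.

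Next I would apply \cite[Proposition 2.1]{LM} to $\Gamma^{(n)}$, after rescaling the last coordinate by $n$. Condition (2.3) is clear; condition (2.4) holds because $\Gamma^{(n)}$ is, by construction, generated by finitely many level-one vectors; and condition (2.5) — that the differences of $v_{Y.}(n\lbb)$ generate $\ZZ^d$ — holds for all sufficiently large $n$ by the argument already given in the proof of Proposition \ref{volume of okounkov}, which uses Zhang's arithmetic Nakai--Moishezon theorem (Theorem \ref{zhang}) together with Theorem \ref{ample+effective}. Hence, for $n\gg 0$,
$$\lim_{k\to\infty}\frac{\#v_{Y.}(V_{k,n}(\lbb))}{(nk)^d}=\vol\big(\Delta^{(n)}\big),\qquad \Delta^{(n)}:=\frac1n\,\mathrm{conv}\big(v_{Y.}(n\lbb)\big)\subseteq\Delta_{Y.}(\lbb).$$

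It then remains to show that $\vol(\Delta^{(n)})\to\vol(\Delta_{Y.}(\lbb))$ as $n\to\infty$, which is the substance of \cite[Proposition 3.1]{LM}; its proof is purely a statement about $\Gamma$, so it applies here verbatim. Concretely, the inclusion $v_{Y.}(a\lbb)+v_{Y.}(b\lbb)\subseteq v_{Y.}((a+b)\lbb)$ yields the superadditivity $a\,\Delta^{(a)}+b\,\Delta^{(b)}\subseteq (a+b)\,\Delta^{(a+b)}$, while the finite sets $\frac1n v_{Y.}(n\lbb)\subseteq\Delta^{(n)}$ are, by definition of $\Delta_{Y.}(\lbb)$ as the closure of $\Lambda_{Y.}(\lbb)$, dense in $\Delta_{Y.}(\lbb)$. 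Given $\epsilon>0$ one picks finitely many rational points $\xi_1,\dots,\xi_r\in\Lambda_{Y.}(\lbb)$ with $\vol(\mathrm{conv}(\xi_1,\dots,\xi_r))>\vol(\Delta_{Y.}(\lbb))-\epsilon$; a Fekete-type interpolation using superadditivity, together with the fact that $v_{Y.}(m\lbb)\neq\emptyset$ for all large $m$ since $\lbb$ is big, produces for every sufficiently large $n$ points $\xi_j(n)\in\Delta^{(n)}$ with $\xi_j(n)\to\xi_j$, so $\liminf_n\vol(\Delta^{(n)})\geq\vol(\mathrm{conv}(\xi_1,\dots,\xi_r))>\vol(\Delta_{Y.}(\lbb))-\epsilon$ by continuity of Lebesgue volume on convex bodies. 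Combining this with the limit formula of the previous paragraph gives the theorem, with $n_0$ chosen large enough for all of the above. The only genuinely delicate point is this last interpolation — arranging the inner approximation for \emph{all} large $n$ rather than merely for multiples of a fixed integer — and it is handled exactly as in \cite{LM}.
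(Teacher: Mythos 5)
Your proposal is correct and follows essentially the paper's route: the paper's entire proof is the observation that the level-$kn$ slice of the sub-semigroup of $\Gamma=\bigcup_{m\geq 0}(v_{Y.}(m\lbb),m)$ generated by $v_{Y.}(n\lbb)\times\{n\}$ is exactly $v_{Y.}(V_{k,n}(\lbb))$, followed by a direct citation of \cite[Proposition 3.1]{LM} for $\Gamma$, whose hypotheses (2.3)--(2.5) were already checked in the proof of Proposition \ref{volume of okounkov} -- precisely your reduction. Your additional sketch of the internals of \cite[Proposition 3.1]{LM} is harmless since you ultimately defer the delicate uniformity in $n$ to \cite{LM}; just note that your justification of condition (2.5) for the sub-semigroup $\Gamma^{(n)}$ by appeal to Proposition \ref{volume of okounkov} literally gives it only for $n$ a multiple of the fixed $N$ in the big case, a point that the cited \cite[Proposition 3.1]{LM} (not that earlier argument) takes care of.
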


This theorem serves as an analogue of \cite[Theorem 3.3]{LM} in the proof of the arithmetic Fujita approximation. It is not as neat as the direct arithmetic analogue:
$$\lim_{k\rightarrow\infty}  
\frac{ \log \#   V_{k,n}(\lbb)}{(nk)^d/d!}
\geq \vol(\lbb)-\epsilon, \quad \forall n>n_0.
$$
But we don't know whether this analogue is true.

For the arithmetic Fujita approximation theorem, we need an extra argument to take care of the archimedean part.

\begin{thm}\label{infinite part}
Let $L$ be an ample line bundle on a projective complex manifold $M$, and let $\|\cdot \|$ be any smooth hermitian metric on $L$. Then
\begin{itemize}
\item[(a)] There is a canonical positive continuous metric $\|\cdot \|'$ on $L$ such that $\|\cdot \|'\geq \|\cdot \|$ pointwise and $\|\cdot \|_{\sup}=\|\cdot \|_{\sup}'$ as norms on $H^0(M,mL)$ for any positive integer $m$.
\item[(b)] The above metric $\|\cdot \|'$ can be uniformly approximated by a sequence 
$\{\|\cdot \|_{j} \}_j$ of positive smooth hermitian metrics $\|\cdot \|_{j} \geq \|\cdot \|'$ on $L$ in the sense that 
$\displaystyle \lim_{j\rightarrow \infty}\frac{\|\cdot\|_{j}}{\|\cdot \|'} =1$ uniformly on $M$.
\end{itemize}
\end{thm}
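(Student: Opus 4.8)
The plan is to take $\|\cdot\|'$ to be the metric attached to the plurisubharmonic envelope of the weight of $\|\cdot\|$, and to deduce (b) from a standard regularization in which a strict-positivity margin is supplied by ampleness. Throughout, write $\|\cdot\|=e^{-\varphi}$ for a smooth local weight $\varphi$, normalized so that semipositivity of the curvature of $\|\cdot\|$ is equivalent to $\varphi$ being plurisubharmonic (psh).

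For (a), consider the family $\mathcal F$ of metrics on $L$ whose local weights $\psi$ are psh and satisfy $\psi\le\varphi$ everywhere. Ampleness of $L$ provides a smooth weight $\varphi_0$ of strictly positive curvature; since $\varphi-\varphi_0$ is a bounded global function on the compact $M$, a suitable constant shift of $\varphi_0$ lies in $\mathcal F$, so $\mathcal F\neq\varnothing$. Let $\varphi'$ be the upper semicontinuous regularization of $\sup\{\psi:\psi\in\mathcal F\}$; by the Brelot--Cartan theorem $\varphi'$ is psh, and $\varphi'\le\varphi$ because $\varphi$ is continuous, so $\varphi'$ is the largest element of $\mathcal F$. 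Set $\|\cdot\|'=e^{-\varphi'}$. It is canonically attached to $\|\cdot\|$, has semipositive curvature current, and $\|\cdot\|'\ge\|\cdot\|$ since $\varphi'\le\varphi$. For the sup-norms: if $s\in H^0(M,mL)$ has $\|s\|_{\sup}\le 1$ with respect to $\|\cdot\|$, then the local weight $\tfrac1m\log|s|$ is psh and $\le\varphi$, hence $\le\varphi'$, which says exactly that $\|s\|_{\sup}\le 1$ with respect to $\|\cdot\|'$; by homogeneity $\|s\|'_{\sup}\le\|s\|_{\sup}$ for every $s$, and the reverse inequality is immediate, so the two sup-norms agree on each $H^0(M,mL)$. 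The one remaining ingredient of (a) is that $\varphi'$ is \emph{continuous}; here I will invoke the known regularity of psh envelopes of semipositive metrics on ample line bundles (the continuity pointed out by Boucksom).

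I expect this continuity to be the main obstacle: unlike the rest of the argument it is a genuine theorem in pluripotential theory rather than a formal manipulation, it is exactly where ampleness of $L$ is used in an essential way, and the corresponding statement is known to fail for general big line bundles. Everything else in (a) and (b) is bookkeeping.

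For (b), fix again a smooth weight $\varphi_0$ of strictly positive curvature and put $A=\sup_M|\varphi'-\varphi_0|<\infty$. For $\varepsilon>0$ the weight $(1-\varepsilon)\varphi'+\varepsilon\varphi_0$ is continuous, strictly psh (its curvature current dominates $\varepsilon$ times a Kähler form), and lies within $\varepsilon A$ of $\varphi'$ uniformly. By Richberg's theorem a continuous strictly psh weight is the uniform limit of smooth strictly psh weights; applying this with $\varepsilon=\varepsilon_j\to 0$ and shifting each smooth approximant down by a suitable constant $\to 0$, I obtain smooth weights $\varphi_j$ of strictly positive curvature with $\varphi_j\le\varphi'$ and $\varphi'-\varphi_j\to 0$ uniformly on $M$. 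The metrics $\|\cdot\|_j=e^{-\varphi_j}$ then satisfy $\|\cdot\|_j\ge\|\cdot\|'$ and $\|\cdot\|_j/\|\cdot\|'=e^{\varphi'-\varphi_j}\to 1$ uniformly, which is precisely (b).
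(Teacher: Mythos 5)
Your proposal is correct and follows essentially the same route as the paper: part (a) is the equilibrium (psh envelope) metric, with continuity delegated to the known regularity theorem of Berman (the point credited to Boucksom), exactly as in the paper, and your verification of the sup-norm equality is the argument the paper leaves implicit. Part (b) likewise matches the paper's proof, which invokes the Demailly/Blocki--Kolodziej (Richberg-type) regularization and ``scalar perturbations''; your convex-combination margin $(1-\varepsilon)\varphi'+\varepsilon\varphi_0$ and downward constant shift just make that reduction explicit.
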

\begin{proof}
The results are standard in complex analysis. We will take the metric $\|\cdot \|'$ to be the equilibrium metric of $\|\cdot \|$ defined as the following envelope: 
$$\|s(z)\|':=\inf\{\|s(z)\|_+: \|\cdot\|_+ {\rm\ positive\ singular\ metric\ on\ } L, \ \|\cdot\|_+ \geq \|\cdot\| \} .$$
By definition, it is positive and satisfies those two norm relations with $\|\cdot \|$ in (a). As for the continuity, see
\cite[Theorem 2.3 (1)]{Be} where the metric is proved to be $C^{1,1}$.

Now we consider (b). In fact, any positive continuous metric can be approximated uniformly by positive smooth ones.
It is essentially due to the technique of Demailly \cite{De}. See also \cite[Theorem 1]{BK} for a shorter proof. Note that monotone convergence to a continuous function is uniform over any compact space. Once the convergence is uniform, it is easy to obtain $\|\cdot \|_j\geq \|\cdot \|'$ by scalar perturbations on the metrics.
For more general results, we refer to \cite{BB}, especially Proposition 3.13 of the paper.
\end{proof}

Now we are prepared to prove our arithmetic Fujita approximation theorem.

\begin{thmc}
Let $\lbb$ be a big line bundle over $X$. 
Then for any $\epsilon>0$, there exist an integer $n>0$, a birational morphism $\pi: X'\rightarrow X$ from another arithmetic variety $X'$ to $X$, and an isomorphism  
$$n\ \pi^*\lbb= \abb+ \ebb$$
for an effective line bundle $\ebb$ on $X'$ and an ample line bundle $\abb$ on $X'$
satisfying
$$\frac{1}{n^d}\vol(\abb)> \vol(\lbb)-\epsilon.$$
\end{thmc}

\begin{proof}
We can assume that $X$ is normal with smooth generic fibre by Moriwaki's pull-back result.

For any positive integer $n$, let $\pi_n: X_n\rightarrow X$ be the blowing-up of the base ideal generated by $\Hhat(X, n\lbb)$. 
Denote by $\eb_n$ the line bundle associated to the exceptional divisor, with $e\in H^0(X_n,\eb_n)$ a section defining the exceptional divisor.
Denote by $\ab_n=n\ \pi^*\lb-\eb_n$, which has regular sections 
$\lambda(s)=\pi^* s\otimes e^{\otimes(-1)}$ for any $s\in \Hhat(X, n\lbb)$.
Furthermore, $\lambda (\Hhat(X, n\lbb))$ is base-point-free.

For any $s\in \Hhat(X, n\lbb)$, we have
$\|\pi^* s\|_{\sup}=\| s\|_{\sup}\leq 1$
under the pull-back metric on $\pi^*\lb$.
We claim that there exist metrics on $\eb_n$ and $\ab_n$ such that under these metrics
$n\ \pi^*\lb=\ab_n+\eb_n$ is isometric, $e$ is effective, and $\lambda(s)$ is effective for all $s\in \Hhat(X, n\lbb)$.

To get such metrics on $\eb_n$ and $\ab_n$, start from any metric $\|\cdot\|_0$ on $\eb_n$. It suffices to find a smooth function $f:X(\CC)\rightarrow \RR_{+}$ such that $f \|e\|_0\leq 1$ and $\|s\|/ (f \|e\|_0) \leq 1$ for all $s\in \Hhat(X, n\lbb)$. Equivalently, we need 
$$\sup_{s}\frac{\|s\|}{\|e\|_0} \leq f\leq \frac{1}{\|e\|_0}.$$
It is possible because the left-hand side is always less than or equal to the right-hand side, and the left-hand side is actually bounded everywhere.

Endowed with the above metrics, we get line hermitian bundles $\ebb_n$ and $\abb_n$ such that 
$$n\ \pi^*\lbb=\abb_n+\ebb_n.$$
Here $\ebb_n$ is effective and $\abb_n$ is base-point-free since $\lambda(\Hhat(X, n\lbb))\subset \Hhat(X, \abb_n)$.
Furthermore, each element of $V_{k,n}(\lbb)$ gives an effective section in 
$\Hhat(X, k\abb_n)$ by the same way. We claim that these sections give
\begin{equation} \label{lll}
\lim_{n\rightarrow\infty} \frac{1}{n^d}\vol(\abb_n)=  \vol(\lbb).
\end{equation} 
Once it is true, we have obtained decompositions satisfying all requirements of the theorem except that 
$\abb_n$ may not be ample, though it is base-point-free. We will make some adjustment to get ampleness, but let us first verify equation (\ref{lll}). It is a consequence of Theorem \ref{comparison level m} and Theorem \ref{finite part}.

In fact, it suffices to show that
$$
\liminf_{n\rightarrow\infty} \frac{1}{n^d}\vol(\abb_n)\geq \vol(\lbb).
$$
Replacing $\lbb$ by its tensor power if necessary, we can assume that $\Hhat(X, \lbb)$ is nonzero.
Let $p_0$ be an integer such that the base locus of $\Hhat(X, \lbb)$ has no vertical
irreducible components lying above any prime $p>p_0$. 
Let $Y.$ be a flag on $X$ with $\mathrm{char}(Y.)=p>p_0$ such that $Y_d$ is not contained in the base locus of $\Hhat(X, \lbb)$.
Then the same property is true for all $\Hhat(X, n\lbb)$ since its base locus is contained in that of $\Hhat(X, \lbb)$.
The strict transform $\pi_n^*Y.$ gives a flag on $X_n$. It follows that
$$
\# \nu_{\pi_n^*Y.}( \Hhat(X, k\abb_n))
\geq\# \nu_{\pi_n^*Y.}(\lambda(V_{k,n}(\lbb)))
=\# \nu_{Y.}( V_{k,n}(\lbb)).
$$
Divide both sides by $(nk)^d$, and set $k\rightarrow \infty$.
We get 
$$
\frac{1}{n^d} \vol( \Delta_{\pi_n^*Y.}(\abb_n))
\geq \lim_{k\rightarrow\infty}  
\frac{ \# v_{Y.}( V_{k,n}(\lbb))}{(nk)^d}.
$$
By Theorem \ref{finite part},
\begin{equation} \label{vvv}
\liminf_{n\rightarrow \infty}
\frac{1}{n^d} \vol( \Delta_{\pi_n^*Y.}(\abb_n))
\geq \vol( \Delta_{ Y.}(\lbb)).
\end{equation}

Now we want to take the limit as $p\rightarrow \infty$ to get volumes of the line bundles. 
It can be done by interchanging the orders of limits, which is possible since 
Theorem A is somehow uniform on $\abb_n$. 
In fact, Theorem \ref{comparison level m} gives
\begin{eqnarray*}
\left|\vol(\Delta_{Y.}(\lbb))\log p-\frac{1}{d!}\vol(\lbb)\right|  
&\leq & \frac{c(\lbb)}{\log p}, \\
\left|\vol(\Delta_{\pi_n^*Y.}(\abb_n))\log p-\frac{1}{d!}\vol(\abb_n)\right|  
&\leq & \frac{c(\abb_n)}{\log p}. 
\end{eqnarray*}
The constants $c(\lbb), c(\abb_n)$ can be taken as follows.
By choosing $\bbb\in\amp$, we can set
$$
c(\lbb)=2 e_0 \frac{\vol(\lb_\QQ)}{\vol(\bb_\QQ)}
\frac{\lbb\cdot \bbb^{d-1}}{(d-1)!}.
$$
The line bundle $\pi_n^*\bbb$ is not ample on $X_n$, but it is nef in the sense that 
$\pi_n^*\bbb^{d-1}\cdot \tbb\geq 0$ for any $\tbb\in \widehat{\rm Eff}(X_n)$.
It follows that $\pi_n^*\bbb$ can also be a reference line bundle in the proof of Theorem \ref{comparison level m}, and thus we can take 
$$
c(\abb_n)=2 e_0 \frac{\vol(\ab_{n,\QQ})}{\vol(\pi_n^*\bb_\QQ)}
\frac{\abb_n\cdot \pi_n^*\bbb^{d-1}}{(d-1)!}
\leq 2 e_0 \frac{\vol(n\lb_\QQ)}{\vol(\bb_\QQ)}
\frac{n\lbb\cdot \bbb^{d-1}}{(d-1)!}
=n^d c(\lbb).
$$
It follows that
\begin{eqnarray*}
\left|\frac{1}{n^d}\vol(\Delta_{\pi_n^*Y.}(\abb_n))\log p-\frac{1}{d!}\frac{\vol(\abb_n)}{n^d} \right|  
&\leq & \frac{c(\lbb)}{\log p}. 
\end{eqnarray*}
Therefore, taking $p\rightarrow \infty$ on equation (\ref{vvv}) yields
\begin{equation*} 
\liminf_{n\rightarrow \infty}
\frac{1}{n^d} \vol( \abb_n)
\geq \vol(\lbb).
\end{equation*}
It finishes proving (\ref{vvv}).

As mentioned above, the line bundle $\abb_n$ in the decomposition above is base-point-free but not ample in general. 
In order to make it ample, we are going to do a lot of modifications.
For simplicity, we write $(\pi_n, X_n, \abb_n)$ by $(\pi, X', \abb)$.
\begin{itemize}
\item We can further assume that $X'$ is normal with smooth generic fibre by pass to a generic resolution of singularity.
\item It suffices to treat the case that $\ab_\QQ$ is ample. 
In fact, we have $n_0\pi^*\lbb=\abb_0+\ebb_0$ for some $n_0>0$, $\abb_0\in \mathrm{amp}(X')$ and 
$\ebb_0\in \mathrm{eff}(X')$. 
Then consider 
$$(nN+n_0)\ \pi^*\lbb=(N\abb+\abb_0)+(N\ebb+\ebb_0).$$
Make $N>>n_0$. 
\item By the same trick above, we can further assume that $\lbb(-c)=(\lb, e^{c}\|\cdot\|)$ is base-point free for some $c>0$.
\item We can assume that $\abb$ has a positive smooth metric. 
By Theorem \ref{infinite part}, we can find a positive metric $\|\cdot\|_j$ of $\abb$ which is pointwise greater than $\|\cdot\|$  and satisfies
$$1\leq\frac{\|\cdot \|_{j,\sup}}{\|\cdot \|_{\sup}}<e^c.$$
Then the decomposition 
$$n\ \pi^*\lbb=(\ab,\|\cdot\|_j)+\ebb(\log \frac{\|\cdot\|_{j}}{\|\cdot \|} )$$
gives what we want.
\item The above $\abb$ is already ample. By the condition that the metric of $\abb$ is positive and $\Hhat(X',\abb)$ is base-point free, we need to check that $\abb$ is ample. 
Since $\ab$ is base-point free, it is automatically nef on any vertical subvareities.
We only need to check that 
$\chern(\lbb|_Z)^{\dim Z}>0$ for any horizontal irreducible
closed subvariety $Z$ of $X'$. 
We can find an $s\in \Hhat(X',\abb)$ such that $\mathrm{div}(s)$ does not contain
$Z$ by this base-point-free property. Use this section to compute intersection. We get
\begin{eqnarray*}
(\abb|_Z)^{\dim Z}
&=&(\abb|_{\mathrm{div}(s).Z})^{\dim
Z-1}-\int_{Z(\CC)}\log \|s\| \ c_1(\lbb)^{\dim Z-1}\\
&>&(\abb|_{\mathrm{div}(s).Z})^{\dim Z-1}.
\end{eqnarray*}
By writing $\mathrm{div}(s).Z$ as a positive linear combination of irreducible cycles, we reduce the problem to a smaller dimension. It means that the proof can be finished by induction on $\dim Z$.

\end{itemize}

\end{proof}

\

{\footnotesize
Address: School of Mathematics, Institute for Advanced Study, Einstein Drive, Princeton, NJ 08540

Email: yxy@math.ias.edu
}

\end{document}